\newcommand{\ric}{\textnormal{Ric}}
\newcommand{\ep}{\varepsilon}
\newcommand{\var}{\textnormal{Var}}
\newcommand{\cH}{\mathcal{H}}
\newcommand{\dbar}{\overline{\partial}}
\newcommand{\ddt}[1]{\frac{\partial #1}{\partial t}}
\newcommand{\dds}[1]{\frac{\partial #1}{\partial s}}
\newcommand{\ddbar}{\sqrt{-1}\partial\dbar}
\newcommand{\cK}{\mathcal{K}}
\newcommand{\cL}{\mathcal{L}}
\newcommand{\cN}{\mathcal{N}}
\newcommand{\cW}{\mathcal{W}}
\newcommand{\vol}{\textnormal{Vol}}
\newcommand{\diam}{\textnormal{Diam}}
\newtheorem{theorem}{Theorem}[section]
\newtheorem{lemma}[theorem]{Lemma}
\newtheorem{proposition}[theorem]{Proposition}
\newtheorem{claim}[theorem]{Claim}
\numberwithin{equation}{section}
\theoremstyle{definition}
\newtheorem{remark}[theorem]{Remark}
\theoremstyle{definition}
\newtheorem{definition}[theorem]{Definition}
\begin{document}

\title{A new proof of Perelman's scalar curvature and diameter estimates for the K\"ahler-Ricci flow on Fano manifolds}

\author{Wangjian Jian$^*$, Jian Song$^\dagger$ and Gang Tian$^\ddagger$}

\thanks{Wangjian Jian is supported in part by NSFC No.12201610, NSFC No.12288201, MOST No.2021YFA1003100. Jian Song is supported in part by National Science Foundation grant DMS-2203607. Gang Tian is supported in part by NSFC No.11890660, MOST No.2020YFA0712800.}

\address{$^*$ Institute of Mathematics, Academy of Mathematics and Systems Science, Chinese Academy of Sciences, Beijing, 100190, China}

\email{wangjian@amss.ac.cn}

\address{$^\dagger$ Department of Mathematics, Rutgers University, Piscataway, NJ 08854}

\email{jiansong@math.rutgers.edu}

\address{$^\ddagger$ BICMR and SMS, Peking University, Beijing 100871, China}

\email{gtian@math.pku.edu.cn}

\maketitle

\begin{abstract}  In this note, we give a new proof for Perelman's scalar curvature and diameter estimates for the K\"ahler-Ricci flow on Fano manifolds. The proof relies on a new Harnack estimate for a special family of functions in space-time. Our new approach initiates the work in \cite{JST23a} for general finite time solutions of the K\"ahler-Ricci flow.

\end{abstract}


\bigskip

\section{Introduction}

Let $X$ be an $n$-dimensional  K\"ahler manifold with $c_1(X)>0$. The Ricci flow introduced by Hamilton \cite{Ham} is a canonical deformation of Riemannian metrics and it preserves the K\"ahler structure on $X$ if the initial metric is K\"ahler.  We consider the unnormalized K\"ahler-Ricci flow
\begin{equation}\label{unkrflow1}
\left\{
\begin{array}{l}
{ \displaystyle \ddt{g(t)} = -\ric(g(t)) ,}\\
\\
g(0)=g_0 \in c_1(X), 
\end{array} \right. 
\end{equation}
on   $X$ with an initial K\"ahler metric $g_0$. The flow must develop finite time singularities at $T=1$ with limiting cohomology class $\lim_{t\rightarrow 1} [g(t)]=0$. 

Our main result of the paper is the following Harnack estimate along the K\"ahler-Ricci flow (\ref{unkrflow1}).

\begin{theorem}\label{main1}
For any $B>0$, there exists   $C=C(n, X, g_0, B)>0$, such that  for any $\mathbf{t} \in [1/2, 1)$ and a positive function $v \in C^{1}\left(X\times [\mathbf{t}-(1-\mathbf{t}), \mathbf{t}] \right)$ satisfying  
\begin{equation}\label{main1LY}
\frac{ |\partial_t v |}{ v } + \frac{|\nabla v |^2}{ v^2 } \leq \frac{B}{1-t},
\end{equation}
on $X\times [\mathbf{t} - (1-\mathbf{t} ), \mathbf{t}]$, we have
\begin{equation}\label{main1har}
\frac{ \sup_{X} v ( \cdot, \mathbf{t} ) }{ \inf_{X} v ( \cdot, \mathbf{t} ) } \leq C .
\end{equation}
\end{theorem}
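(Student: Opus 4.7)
Set $u=\log v$, so that the hypothesis (\ref{main1LY}) decouples into the pointwise bounds $|\partial_t u|\le B/(1-t)$ and $|\nabla u|_{g(t)}^2\le B/(1-t)$ on $X\times[\mathbf{t}-(1-\mathbf{t}),\mathbf{t}]$, and the conclusion (\ref{main1har}) is equivalent to $\osc_X u(\cdot,\mathbf{t})\le \log C$. My plan is to argue by contradiction: assume $K:=\sup_X v(\cdot,\mathbf{t})/\inf_X v(\cdot,\mathbf{t})$ is very large, use the spatial gradient bound at time $\mathbf{t}$ to produce two disjoint $g(\mathbf{t})$-balls on which the values of $v$ are clearly separated, and then invoke volume non-collapsing at the parabolic scale $\sqrt{1-\mathbf{t}}$ to force a contradiction against the total volume $\vol_{g(\mathbf{t})}(X)=(1-\mathbf{t})^n V_0$.

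Let $a:=\log K$ and pick $x^*,x_*\in X$ where $v(\cdot,\mathbf{t})$ attains its supremum and infimum respectively. Integrating $|\nabla u|_{g(\mathbf{t})}\le\sqrt{B/(1-\mathbf{t})}$ along minimizing $g(\mathbf{t})$-geodesics from $x^*$ (resp.\ $x_*$) shows that, with the choice $r_0:=(a/4)\sqrt{(1-\mathbf{t})/B}$, one has $v\ge K^{3/4}\inf v$ on $B^{\sharp}:=B_{g(\mathbf{t})}(x^*,r_0)$ and $v\le K^{1/4}\inf v$ on $B^{\flat}:=B_{g(\mathbf{t})}(x_*,r_0)$. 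Since $K>1$, the two balls $B^{\sharp}$ and $B^{\flat}$ are disjoint. Next, I would apply Perelman-type $\kappa$-non-collapsing for the Kähler-Ricci flow on the Fano manifold $X$ to obtain constants $\kappa,\rho_0>0$ depending only on $(n,X,g_0)$ such that $\vol_{g(\mathbf{t})}(B(x,r))\ge\kappa r^{2n}$ for every $x\in X$ and every $r\le\rho_0\sqrt{1-\mathbf{t}}$; note that the time interval $[\mathbf{t}-(1-\mathbf{t}),\mathbf{t}]$ supplied by the hypothesis accommodates exactly the parabolic cylinders of scale $\sqrt{1-\mathbf{t}}$ that this non-collapsing requires. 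Provided $r_0\le\rho_0\sqrt{1-\mathbf{t}}$, comparing $\vol(B^{\sharp})+\vol(B^{\flat})$ with $\vol_{g(\mathbf{t})}(X)=(1-\mathbf{t})^n V_0$ yields
\[
2\kappa\,(a/4)^{2n}(1-\mathbf{t})^n/B^n \;\le\; (1-\mathbf{t})^n V_0,
\]
so that $a\le C(n,V_0,\kappa)\,B^{1/2}$; in the complementary regime $a>4\rho_0\sqrt{B}$ the desired bound on $a$ is already in hand. Either way, $K\le\exp(C(n,X,g_0,B))$.

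The principal obstacle is to justify the $\kappa$-non-collapsing step without any a priori curvature bound, since Perelman's original statement requires $|\mathrm{Rm}|\le r^{-2}$ inside the parabolic cylinder. The intended resolution is to use the uniform lower bound for Perelman's $\mu$-entropy functional along the Kähler-Ricci flow on a Fano manifold, which yields a uniform log-Sobolev (and hence Sobolev) inequality, from which curvature-free non-collapsing at the parabolic scale $\sqrt{1-\mathbf{t}}$ can be deduced. Tracking the admissible scale $\rho_0\sqrt{1-\mathbf{t}}$ against the radius $r_0$ that actually appears, and carefully handling the boundary regime where $r_0$ and $\rho_0\sqrt{1-\mathbf{t}}$ are comparable, is where the technical effort lies. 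The time-derivative part of (\ref{main1LY}) is not consumed by the contradiction argument itself, but is exactly what guarantees that $v$ is controlled on the full parabolic cylinder needed to invoke Perelman's non-collapsing.
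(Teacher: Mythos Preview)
Your proposal has two genuine gaps. First, the two-ball volume comparison is circular: the radius you choose is $r_0=(a/4)\sqrt{(1-\mathbf{t})/B}$, and any non-collapsing you can hope to invoke applies only for $r\le\rho_0\sqrt{1-\mathbf{t}}$, i.e.\ precisely when $a\le4\rho_0\sqrt{B}$. In that regime the bound on $a$ is a tautology, not a consequence of the volume inequality; in the complementary regime $a>4\rho_0\sqrt{B}$, shrinking both balls to the admissible radius $\rho_0\sqrt{1-\mathbf{t}}$ keeps them disjoint but yields only $2\kappa\rho_0^{2n}\le V_0$, which says nothing about $a$. Two balls are simply not enough: one must manufacture roughly $N\sim a$ disjoint balls of the \emph{fixed} parabolic radius, so that $N\kappa(1-\mathbf{t})^n\le V_0(1-\mathbf{t})^n$ forces $N\le C$. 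This is exactly what the paper does, picking $N$ intermediate level points $x_k$ with $f(x_k,t_0)=\inf f+10kA$.

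Second, and more fundamentally, no curvature-free $\kappa$-non-collapsing at \emph{arbitrary} base points is available to you. Perelman's statement needs $|\mathrm{Rm}|\le r^{-2}$; the entropy/Sobolev route (e.g.\ Lemma~\ref{lvnc}) still needs $R\le r^{-2}$ on the ball, and the bound $R\le C(1-t)^{-1}$ is precisely Theorem~\ref{eoper}, which Theorem~\ref{main1} is meant to prove---so invoking it here is circular. The paper's resolution is to move the whole argument to time $t_0-r_0^2$ and to Bamler's $H_{2n}$-centers $z_k$, where Lemma~\ref{hcv} gives non-collapsing assuming only a scalar curvature \emph{lower} bound (automatic along the flow). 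The bridge from the $x_k$ to the $z_k$ runs through Perelman's $\ell$-centers $y_k$: integrating along an $\mathcal{L}$-geodesic of controlled $\mathcal{L}$-length transports $f(x_k,t_0)$ to $f(y_k,t_0-r_0^2)$ up to $O(1)$ (and here the time-derivative half of (\ref{main1LY}) is genuinely consumed), the spatial gradient bound then separates the $y_k$ at scale $r_0$, and Lemma~\ref{wdn} places each $z_k$ within $Cr_0$ of $y_k$. Your outline is missing both the passage to many intermediate levels and the $\ell$-center/$H_n$-center machinery that makes the non-collapsing step non-circular.
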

Theorem \ref{main1} provides a new proof of Perelman's fundamental estimates for the finite time solution of the K\"ahler-Ricci flow (\ref{unkrflow1}) on the Fano manifold $X$ (cf. \cite{SeT}).
\begin{theorem}[Perelman]\label{eoper} 
Let $g(t)$ be the maximal solution of (\ref{unkrflow1}) on the Fano manifold $X$ for $t\in [0, 1)$. There exists $C=C(n, X, g_0)>0$ such that
\begin{enumerate}
\medskip
\item $R(x, t) \leq C(1-t)^{-1}$, 
\medskip
\item $\diam(X,  g(t))\leq C(1-t)^{1/2}$,
%
\medskip
\end{enumerate}
on $X\times [0, 1)$, where $R(\cdot, t)$ is the scalar curvature of $g(t)$ and $\diam(X, g(t))$ is the diameter of $(X, g(t))$ .
\end{theorem}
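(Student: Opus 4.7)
The plan is to apply the Harnack estimate of Theorem \ref{main1} to $v = e^{-u_t}$, where $u_t$ is the Ricci potential of $\omega(t)$ along the flow. Since $[\omega(t)] = (1-t)c_1(X)$, there is a smooth function $u_t$ on $X$, unique up to a time-dependent additive constant, satisfying
\begin{equation*}
\omega(t) - (1-t)\,\ric(\omega(t)) \;=\; \ddbar\, u_t,
\end{equation*}
and tracing this identity against $\omega(t)$ yields the key relation $(1-t)\,R(\omega(t)) = n - \Delta u_t$, so that a pointwise lower bound on $\Delta u_t$ translates directly into the scalar curvature bound $R \leq C/(1-t)$.

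The first step is to derive the differential inequality required by Theorem \ref{main1}, namely
\begin{equation*}
|\partial_t u_t| \,+\, |\nabla u_t|^2 \;\leq\; \frac{B}{1-t}
\end{equation*}
on the sliding intervals $[\mathbf{t}-(1-\mathbf{t}),\mathbf{t}]$ with $\mathbf{t} \in [1/2,1)$. The evolution equation $\partial_t u_t = \Delta u_t + c(t) - n$, derived directly from the flow (\ref{unkrflow1}) and the definition of $u_t$, controls $|\partial_t u_t|$ in terms of $\Delta u_t$ and the time-dependent normalization constant $c(t)$. The gradient bound $|\nabla u_t|^2 \leq B/(1-t)$ is then obtained from the Bochner formula together with a parabolic maximum principle applied to $(1-t)|\nabla u_t|^2$, using only the easy scalar curvature lower bound $R \geq -C/(1-t)$ (itself an immediate consequence of $\partial_t R = \Delta R + 2|\ric|^2$ and the maximum principle). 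The boundedness of $c(t)$ uses Perelman's $\mathcal{W}$-functional monotonicity, which supplies integral control on $u_t$ and is independent of the scalar curvature estimate we are trying to prove.

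Applying Theorem \ref{main1} to $v = e^{-u_t}$ with $\mathbf{t} \in [1/2, 1)$ arbitrary then yields $\osc_X u_t(\cdot, \mathbf{t}) \leq \log C$, and together with the $L^1$-average control coming from the $\mathcal{W}$-entropy normalization this upgrades to $|u_t| \leq C$ uniformly on $X \times [0,1)$. The scalar curvature estimate $R \leq C/(1-t)$ then follows by combining the identity $R = (n-\Delta u_t)/(1-t)$ with a parabolic maximum principle applied to $(1-t) R + A u_t$ for sufficiently large $A$, which forces $\Delta u_t$ to remain bounded from below. Finally, the diameter bound $\diam(X, g(t)) \leq C(1-t)^{1/2}$ follows from the scalar curvature upper bound together with Perelman's non-collapsing property (which depends only on $\mathcal{W}$-monotonicity) and the bounded Ricci potential serving as a distance-like function. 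The main obstacle is the derivation of the differential inequality in the second step: while the order $1/(1-t)$ is forced by scaling, producing it without circularly invoking Perelman's own estimates requires careful parabolic maximum-principle bookkeeping on the Bochner evolution of $|\nabla u_t|^2$ along the K\"ahler--Ricci flow.
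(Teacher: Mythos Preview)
Your proposal has a genuine circularity in the time-derivative estimate. With $v=e^{-u_t}$, the hypothesis of Theorem~\ref{main1} becomes $|\partial_t u_t|+|\nabla u_t|^2\le B/(1-t)$. You correctly observe that $\partial_t u_t=\Delta u_t+c(t)-n=-(1-t)R+c(t)$. The \emph{upper} bound $\partial_t u_t\le C$ follows from the easy scalar curvature lower bound, but the \emph{lower} bound $\partial_t u_t\ge -B/(1-t)$ forces $(1-t)R\le B/(1-t)+C$, i.e.\ $R\le C/(1-t)^2$. No such a~priori estimate is available: the evolution $\partial_t R=\Delta R+2|\ric|^2$ gives no upper bound on $R$ by the maximum principle, and the quadratic rate $C/(1-t)^2$, while weaker than the target, is still essentially the content of Perelman's theorem. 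Your gradient estimate $(1-t)|\nabla u_t|^2\le C$ does follow from the crude maximum-principle bound on the Bochner evolution, but the time-derivative bound does not.

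There is a second gap: even granting the Harnack inequality and hence $|u_t|\le C$, your proposed maximum principle on $(1-t)R+Au_t$ does not close. Its heat-operator evolution is $-R+2(1-t)|\ric|^2+A(c(t)-n)$, and in the K\"ahler setting $|\ric|^2=(1-t)^{-2}|g-\sqrt{-1}\partial\dbar u_t|^2$ involves the full complex Hessian of $u_t$, which is not controlled by $\Delta u_t$ (equivalently $R$) alone. The paper avoids both issues by a structurally different route: it applies Theorem~\ref{main1} not to $e^{-u}$ but to the Ricci potential $v$ itself, normalized so that $\inf_X v=1$. The essential input is Proposition~\ref{gandleou2}, which establishes the Perelman-type bounds $|\nabla u|^2/u\le C$ and $|\Delta u|/u\le C$ by a maximum-principle argument that requires \emph{no} a~priori upper bound on $u$. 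The Laplacian estimate then gives $|\partial_s v|/v\le |\Delta v|/v+C\le C$ without circularity, and once the Harnack inequality bounds $\sup_X v$, the same Laplacian estimate immediately yields $|\Delta u|\le C\sup u\le C$, hence the scalar curvature bound. The step you are missing is precisely this independent Laplacian estimate for the quotient $|\Delta u|/u$.
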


After applying the rescaling 
\begin{equation}\label{rb unkrf and nkrf}
s=-\ln (1-t),~~ t=1-e^{-s},~~ \tilde{g}(s)=(1-t)^{-1}g(t),~~t\in [0,1), 
\end{equation}
we have the following Fano K\"ahler-Ricci flow as the normalized flow of (\ref{unkrflow1})
\begin{equation}\label{nkrflow1}
\left\{
\begin{array}{l}
{ \displaystyle \dds{\tilde{g}(s)} = -\ric(\tilde{g}(s)) + \tilde{g}(s),}\\
\\
\tilde{g}(0) =g_0.
\end{array} \right.
\end{equation}
The Fano K\"ahler-Ricci flow (\ref{nkrflow1}) has a long-time solution for $s\in [0, \infty)$.  The scalar curvature and diameter of $\tilde g(s)$ are both uniformly bounded on $X\times [0, \infty)$  following Perelman's estimates (Theorem \ref{eoper}).  The Hamilton-Tian conjecture predicts that $(X, \tilde g(s))$  subsequentially converges in Gromov-Hausdorff distance to a compact K\"ahler-Ricci soliton with smooth convergence  outside a close singular set of codimension $\geq 4$. Perelman's result laid the foundation for the Hamilton-Tian conjecture, which is solve in \cite{TiZZL} for complex dimension $3$ and in  \cite{CW3, Bam18} for all dimensions. We also refer readers to \cite{TiZXH2, PS, TiZXH1}  among the large body of literatures in the field of the Fano K\"ahler-Ricci flow. 

We summarize our proof for Perelman's estimates (Theorem \ref{eoper}).
\begin{enumerate}
\smallskip
\item We apply a new normalization for the Ricci potential $u=u(x, s)$ under the normalized flow (\ref{nkrflow1}) so that $\inf_X u(\cdot, s)=1$ for all $s>0$. %
\medskip
\item We obtain   the following Li-Yau type's estimates (see \cite{LY}) similar to the original estimates of Perelman
$$
\frac{ |\Delta u|}{ u } + \frac{ |\nabla u|^2}{ u } \leq C,
$$
on $X\times [0, \infty)$.  Theorem \ref{eoper} is then reduced to the uniform upper bound of $u$ by the above estimate and Perelman's $\kappa$-noncollapsing \cite{Per1}.
\medskip
\item  In order to obtain a uniform upper bound for $u$,  Perelman has to employ a geometric contraction argument to establish the uniform diameter bound. Our approach is entirely different and opposite, by establishing a direct bound for $u$ from Theorem \ref{main1} without controlling the diameter. This is achieved by combining Perelman's reduced distance and Wasserstein distance in recent work \cite{Bam20a}. 
 
 \medskip
\end{enumerate}
Unfortunately, the original normalization and contraction argument in Perelman's proof cannot be directly applied to or adapted for general finite time solutions of the K\"ahler-Ricci flow when $X$ is not Fano or when the initial K\"ahler class is not proportional to $c_1(X)$. Our new normalization for the Ricci potential and the Harnack estimate in Theorem \ref{main1} lead to a new approach \cite{JST23a} to achieve local scalar curvature and distance estimates for general finite time solutions of the K\"ahler-Ricci flow. These estimates will play an important role in the analytic minimal model program proposed in \cite{ST, ST2, ST4, JST23a}. 

\medskip

\section{The Harnack estimate on Fano K\"ahler-Ricci flow}\label{HeofFanoKRF}

In this section, we prove Theorem \ref{main1}. First, we will give some background for the Ricci flow theory, mainly based on \cite{Bam20a}.


\subsection{Preliminary results on entropy and heat kernel bounds}

Let $(M, g(t))_{t\in I}$ be a smooth Ricci flow on a compact $n$-dimensional manifold with the interval $I\subset \mathbb{R}$. The heat operator associated to $(M, g(t))$ is defined by
$$\Box = \ddt{} - \Delta$$
and the conjugate heat operator is defined by
$$\Box^* = - \ddt{} - \Delta + R, $$
where $\Delta$ is the Laplacian associated to $g(t)$ and $R$ is the scalar curvature of $g(t)$.

For any $(x, t)$, $(y, s) \in M \times I$ with $s\leq t$, we denote by $K(x,t; y,s)$ the heat kernel of the Ricci flow based at $(y,s)$ satisfying
\begin{equation}
\Box K(\cdot, \cdot; y,s)=0, ~~ \lim_{t\rightarrow s^+} K(\cdot, t, ; y, s) = \delta_y,
\end{equation}
where $\delta_y$ is the Dirac measure at $y$. Similarly, $K(x, t; \cdot,\cdot)$ is the conjugate heat kernel based at $(x,t)$ satisfying
\begin{equation}
\Box^* K(x, t; \cdot, \cdot)=0, ~ \lim_{s\rightarrow t^-} K(x, t; \cdot, s) =\delta_x.
\end{equation}

Using the conjugate heat kernel, we can define the conjugate heat measure $\nu_{x, t; s}$ based at $(x,t)$ by
\begin{equation}
d\nu_{x,t;s} = K(x,t; \cdot, s) dg(t) = (4\pi\tau)^{-n/2} e^{-f} dg(t),
\end{equation}
where $\tau=t-s$ and $f\in C^\infty(M \times (-\infty, t))$ is called the potential of the conjugate heat measure $\nu_{x,t;s}$.

For two probability measures $\mu_1$ and $\mu_2$ on a Riemannian manifold $(M, g)$, the Wasserstein $W_1$-distance between $\mu_1$ and $\mu_2$ is defined by
\begin{equation}
d^g_{W_1}(\mu_1, \mu_2) = \sup_f \left( \int_M f d\mu_1 - \int_M f d\mu_2 \right), 
\end{equation}
where the supremum is taken over all bounded $1$-Lipschitz function on $(M, g)$. The variance between $\mu_1$ and $\mu_2$ is defined by
\begin{equation}
\var(\mu_1, \mu_2) = \int_{(x_1, x_2) \in M \times M}  d_g^2(x_1, x_2) d\mu_1(x) d\mu_2(x_2).
\end{equation}
The following basic relation is proved in \cite{Bam20a} between the Wasserstein $W_1$-distance and the variance
\begin{equation}
d^g_{W_1} (\mu_1, \mu_2) \leq \sqrt{\var(\mu_1, \mu_1)}.
\end{equation}

The $H_n$-center is defined in \cite{Bam20a} associated to a base point along the Ricci flow.

\begin{definition}\label{hnc}
A point $(z, t)\in M\times I$ is called an $H_n$-center of a point $(x_0, t_0)\in M \times I$ if $t< t_0$ and
\begin{equation}
\var_t(\delta_z, \nu_{x_0, t_0; t}) \leq H_n (t_0 - t),
\end{equation}
where $\var_t$ is the variance with respect to the metric $g(t)$.

\end{definition}
Due to \cite[Proposition 3.12]{Bam20a}, given any $(x_0, t_0)$ and $t<t_0$, there exists at least one $H_n$-center of $(x_0, t_0)$. Immediately, if $(z, t)$ is an $H_n$-center of $(x_0, t_0)$, then we have
\begin{equation}\label{dvh}
d^g_{W_1}(\delta_z, \nu_{x_0, t_0; t} ) \leq \sqrt{\var(\delta_z, \nu_{x_0, t_0; t} )} \leq \sqrt{H_n(t_0-t)}.
\end{equation}
The following lemma if proved in \cite{Bam20a}, which asserts that the mass of the conjugate heat kernel measure will concentrate around the $H_n$-centers.
\begin{lemma} \label{hnc'2}
If the point $(z, t)$ is an $H_n$-center of $(x_0, t_0)$ with $t< t_0$, then for any $A>0$, we have
$$\nu_{x_0, t_0; t} \left( B\left( z, t, \sqrt{AH_n (t_0-t)} \right) \right) \geq 1- \frac{1}{A} .$$
\end{lemma}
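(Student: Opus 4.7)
The plan is to recognize that the claim is a Chebyshev-style tail estimate for the probability measure $\nu_{x_0, t_0; t}$, where the role of the second moment is played precisely by the variance that defines the $H_n$-center. No flow-specific input is needed beyond the definition already in hand.

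First, I would unpack the $H_n$-center hypothesis. Since one of the two measures in the variance is a Dirac mass at $z$, the definition of $\var_t$ collapses to a single integral:
\begin{equation*}
\var_t(\delta_z, \nu_{x_0, t_0; t}) = \int_M d_{g(t)}^2(z, y)\, d\nu_{x_0, t_0; t}(y) \leq H_n(t_0 - t),
\end{equation*}
where the inequality is exactly the defining property of an $H_n$-center from Definition \ref{hnc}.

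Next, I would apply Chebyshev's inequality at the radius $r := \sqrt{AH_n(t_0-t)}$. On the complement $E := M \setminus B(z, t, r)$, every point $y$ satisfies $d_{g(t)}^2(z, y) \geq r^2 = AH_n(t_0-t)$, so
\begin{equation*}
AH_n(t_0 - t)\, \nu_{x_0, t_0; t}(E) \leq \int_E d_{g(t)}^2(z, y)\, d\nu_{x_0, t_0; t}(y) \leq \int_M d_{g(t)}^2(z, y)\, d\nu_{x_0, t_0; t}(y) \leq H_n(t_0 - t).
\end{equation*}
Dividing through by $AH_n(t_0-t)$ yields $\nu_{x_0, t_0; t}(E) \leq 1/A$, and since $\nu_{x_0, t_0; t}$ is a probability measure (total mass $1$) the complementary ball has mass at least $1 - 1/A$, which is the desired bound.

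There is no real obstacle here: the only thing to be careful about is that the variance is taken with respect to $g(t)$ (as is the ball $B(z, t, \cdot)$), so the distances appearing in Chebyshev's inequality and in the definition of $E$ are the same, and that $\nu_{x_0, t_0; t}$ is indeed a probability measure, which follows from the fact that the conjugate heat kernel starting from the Dirac mass $\delta_{x_0}$ preserves total mass under $\Box^\ast$.
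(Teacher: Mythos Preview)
Your proof is correct: the lemma is exactly a Chebyshev tail bound applied to the second moment $\var_t(\delta_z,\nu_{x_0,t_0;t})$, and your unpacking of the variance with one Dirac factor and the subsequent inequality chain are both accurate. The paper itself does not supply a proof but cites \cite{Bam20a}, where the argument is precisely the Chebyshev computation you wrote down, so your approach coincides with the intended one.
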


We now define the Nash entropy introduced by Hein-Naber \cite{HN}. Let $d\nu= (4\pi \tau)^{-n/2} e^{-f} dg$ be a probability measure on a closed $n$-dimensional Riemannian manifold $(M, g)$ with $\tau>0$ and $f\in C^\infty(M)$. The Nash entropy is defined by
\begin{equation}
\cN[g, f, \tau] = \int_M f d\nu - \frac{n}{2}.
\end{equation}

We can rewrite the conjugate heat measure based at $(x_0, t_0)\in M\times I$ by
$$d\nu_{x_0, t_0; t} = K(x_0, t_0; \cdot, t) dg(t) = (4\pi \tau)^{-n/2} e^{-f(t)} dg(t),$$
where $\tau = t_0 - t\geq 0$. Then we define the pointed Nash entropy along Ricci flow based at $(x_0, t_0)$ by 
\begin{equation}
\cN_{x_0, t_0}(\tau) = \cN [ g(t_0-\tau), f(t_0-\tau), \tau].
\end{equation}
Then we set $\cN_{x_0, t_0}(0) =0$, which makes $\cN_{x_0, t_0}(\tau)$ being continuous at $\tau=0$. We also define
\begin{equation}
\cN_s^*(x_0, t_0)= \cN_{x_0, t_0}(t_0-s),
\end{equation}
for $s< t_0$ and $s\in I$. The pointed Nash entropy $\cN_{x_0, t_0}(\tau)$ is non-increasing when $\tau\geq 0$ is increasing.

For any compact, n-dimensional manifold $(M, g)$, Perelman's $\cW$-functional is defined by, for any $\tau > 0$,
$$\cW[g, f, \tau] = (4\pi \tau)^{-n/2} \int_M \left(\tau ( |\nabla f|^2 + R) + f \right) e^{-f} dg,$$
with $f\in C^\infty(M)$ so that $\int_M (4\pi \tau)^{-n/2} e^{-f} dg =1$, and Perelman's $\mu$-functional and $\nu$-functional are defined by
$$\mu[g, \tau]=\underset{\int_M (4\pi \tau)^{-n/2} e^{-f} dg =1}{\inf} \cW[g, f, \tau],$$
and 
$$\nu[g, \tau]=\underset{0<\tau'<\tau}{\inf} \mu[g, \tau'].$$
If $(M, (g_t)_{t\in [0, T)})$ is a Ricci flow, then the functions $t\to \mu[g_t, T-t]$ and $t\to \nu[g_t, T-t]$ are non-decreasing. It is proved in \cite{Bam20a} that 
\begin{equation}\label{NM}
\cN^*_t (x_0, t_0) \geq \mu[g(t), t_0-t],  
\end{equation}
for any $t<t_0$. 
In \cite{Bam20a}, Bamler established systematic results on the Nash entropy and heat kernel bounds on Ricci flow background. 
The following quantitative volume estimate is established in \cite{Bam20a}, extending Perelman's volume non-collapsing estimates.

\begin{lemma} \label{lvnc}
Let $(M, g(t))_{t\in [-r^2, 0]}$ be a solution of the Ricci flow. If $R\leq r^{-2}, ~ on~ B_{g(0)}(x, r)$, then we have
\begin{equation}
\vol_{g(0)} (B_{g(0)}(x, r)) \geq c \exp\left(\cN^*_{-r^2}(x, 0)\right) r^n.
\end{equation}

\end{lemma}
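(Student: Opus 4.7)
The plan is a two-step reduction: first rescale so that $r=1$, and then turn a sharp Gaussian upper bound on the conjugate heat kernel into the volume lower bound by combining it with the concentration of the heat measure at the $H_n$-center. By the parabolic rescaling $\hat g(t) := r^{-2} g(r^2 t)$, defined on $[-1,0]$, which preserves the Ricci flow and the quantities in the conclusion, I may assume $r=1$, so it suffices to prove $\vol_{g(0)}(B_{g(0)}(x,1)) \geq c(n) \exp(\cN^*_{-1}(x,0))$ under $R \leq 1$ on $B_{g(0)}(x,1)$.

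The central analytic input I would invoke is a pointwise heat kernel estimate of the form
\begin{equation*}
K(x, 0; y, -\tau) \leq C(n)\, \tau^{-n/2} \exp\!\bigl(-\cN^*_{-\tau}(x, 0)\bigr)
\end{equation*}
valid on an appropriate neighborhood of an $H_n$-center $(z_*, -\tau)$ of $(x,0)$. Such a bound is a Perelman-type differential Harnack estimate for the conjugate heat kernel, tracked together with the monotonicity of the Nash entropy. Integrating this upper bound against Lemma~\ref{hnc'2} (which assigns at least half the mass of $\nu_{x,0;-\tau}$ to $B_{g(-\tau)}(z_*, \sqrt{2H_n\tau})$) yields
\begin{equation*}
\vol_{g(-\tau)}\!\bigl(B_{g(-\tau)}(z_*, \sqrt{2H_n\tau})\bigr) \geq c(n)\, \tau^{n/2} \exp\!\bigl(\cN^*_{-\tau}(x,0)\bigr).
\end{equation*}
Choosing $\tau = 1$, the right-hand side already contains the desired factor $\exp(\cN^*_{-1}(x,0))$.

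The remaining task is to transfer this volume estimate from $B_{g(-1)}(z_*, \sqrt{2H_n})$ back to $B_{g(0)}(x, 1)$. For this I would use the Ricci flow volume evolution $\partial_t dg = -R\, dg$, the scalar curvature hypothesis $R \leq 1$ on $B_{g(0)}(x,1)$, and the bound (\ref{dvh}) which places $z_*$ near $x$ in the $W_1$-sense, to produce a two-sided volume comparison between the two balls up to a dimensional constant. I expect the hardest point to be precisely this final transfer: the hypothesis $R \leq 1$ is one-sided, pointwise, and given only on a single time slice and a single ball, while what is needed is some control of the flow on the space-time slab over a neighborhood of $x$. Handling this is the technical heart of \cite{Bam20a} and has to be extracted from the $H_n$-center and Nash entropy machinery together with the Bamler--Perelman $\cL$-geometry, rather than from an a priori curvature bound along the full flow.
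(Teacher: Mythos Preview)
The paper does not give its own proof of this lemma; it is quoted from \cite{Bam20a} (Theorem~8.1 there), and in fact it is never invoked elsewhere in the paper --- only the $H_n$-center version, Lemma~\ref{hcv}, enters the proof of Theorem~\ref{main1}. So there is nothing in the paper to compare against.

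Judged on its own, your outline has a genuine gap, and to your credit you name it. Everything through the display
\[
\vol_{g(-\tau)}\bigl(B_{g(-\tau)}(z_*,\sqrt{2H_n\tau})\bigr)\ \ge\ c(n)\,\tau^{n/2}\exp\bigl(\cN^*_{-\tau}(x,0)\bigr)
\]
is correct, but it is precisely Lemma~\ref{hcv}, which needs no hypothesis on $R$ at the terminal slice. The entire content of Lemma~\ref{lvnc} over and above Lemma~\ref{hcv} is your ``remaining task'', and the two tools you propose do not accomplish it. The bound \eqref{dvh} compares $\delta_{z_*}$ with $\nu_{x,0;-1}$ in the $g(-1)$-Wasserstein distance; it gives no control on $d_{g(-1)}(x,z_*)$, let alone on any $g(0)$-distance, because there is no a priori bound on $d^{g(-1)}_{W_1}(\delta_x,\nu_{x,0;-1})$ without curvature control across the slab (via Lemma~\ref{wdn} this would require a bound on $R(x,\cdot)$ on all of $[-1,0]$, which you do not have). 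Likewise $\partial_t\,dg=-R\,dg$ with $R\le 1$ known only at $t=0$ and only on $B_{g(0)}(x,1)$ yields no comparison between $\vol_{g(-1)}$ and $\vol_{g(0)}$, and the ball $B_{g(-1)}(z_*,\sqrt{2H_n})$ need not meet $B_{g(0)}(x,1)$ at all.

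So the route ``first prove Lemma~\ref{hcv}, then transfer'' is not how the hypothesis $R\le r^{-2}$ is actually used. In Bamler's argument that hypothesis is consumed at the terminal slice itself: it is exactly what controls the scalar-curvature term when one tests against a cutoff supported in $B_{g(0)}(x,r)$ (in the spirit of Perelman's $\cW$-functional noncollapsing, but with $\mu$ sharpened to the pointed $\cN^*$ via the $L^\infty$ heat-kernel bound), and the volume lower bound at time $0$ around $x$ comes out directly, with no detour through a ball at time $-r^2$ around an $H_n$-center.
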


The assumption on the scalar curvature upper bound can be removed near the $H_n$-center as proved in \cite{Bam20a}.
\begin{lemma} \label{hcv}  
Let $(M, g(t))_{t\in [-r^2, 0]}$ be a solution of the Ricci flow. Suppose $(z, -r^2)$ is an $H_n$-center of $(x_0, 0)$ and 
$$R(\cdot, -r^2) \geq R_{min},$$
for some fixed $R_{min} \in \mathbb{R}$. Then there exists $c= c(R_{min} r^2)>0$ such that 
\begin{equation}
\vol_{g(-r^2)}(B_{g(-r^2)}(z, (2H_n)^{1/2} r)) \geq c \exp\left(\cN^*_{-r^2}(x, 0)\right) r^n.
\end{equation}

\end{lemma}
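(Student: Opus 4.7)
The strategy is to combine the concentration property of the conjugate heat measure at the $H_n$-center (Lemma \ref{hnc'2}) with a pointwise heat-kernel upper bound available when $R \geq R_{min}$, linked by a standard Jensen-inequality manipulation.

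First, I apply Lemma \ref{hnc'2} with $A = 2$: the conjugate heat measure $\nu := \nu_{x_0, 0; -r^2}$ satisfies $\nu(B) \geq 1/2$, where $B := B_{g(-r^2)}(z, (2H_n)^{1/2} r)$. Writing $d\nu = (4\pi r^2)^{-n/2} e^{-f}\, dg(-r^2)$, the volume can be rewritten as $\vol_{g(-r^2)}(B) = (4\pi r^2)^{n/2} \int_B e^f\, d\nu$. Applying Jensen's inequality to the convex function $e^x$ against the conditional probability measure $d\nu/\nu(B)$ supported in $B$, I obtain
$$\vol_{g(-r^2)}(B) \,\geq\, \tfrac{1}{2}(4\pi r^2)^{n/2} \exp\!\left(\frac{1}{\nu(B)} \int_B f\, d\nu\right).$$

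The remaining task is to bound the conditional $\nu$-average $\bar f_B := \nu(B)^{-1} \int_B f\, d\nu$ from below by $\cN^*_{-r^2}(x_0, 0) - C(R_{min} r^2)$. For this I would invoke a pointwise heat-kernel upper bound from \cite{Bam20a}: since the scalar-curvature lower bound $R \geq R_{min}$ propagates forward under the Ricci flow on $[-r^2, 0]$, one has
$$K(x_0, 0; y, -r^2) \,\leq\, \frac{C(R_{min} r^2)}{(4\pi r^2)^{n/2}} \, e^{-\cN^*_{-r^2}(x_0, 0)}$$
for all $y \in M$, which rearranges to the pointwise bound $f(y, -r^2) \geq \cN^*_{-r^2}(x_0, 0) - C_0(R_{min} r^2)$. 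Integrating this against $d\nu|_B / \nu(B)$ gives $\bar f_B \geq \cN^*_{-r^2}(x_0, 0) - C_0$, and substitution into the Jensen estimate yields the desired inequality with $c = \tfrac{1}{2}(4\pi)^{n/2} e^{-C_0(R_{min} r^2)}$.

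The main obstacle is precisely that pointwise heat-kernel upper bound in terms of the Nash entropy: it is not elementary on a general Ricci-flow background, and relies on the monotonicity of Perelman's $\mu$-functional together with log-Sobolev-to-heat-kernel duality, where the dependence on $R_{min} r^2$ enters through the lower bound for $\mu[g(t), r^2]$ under a scalar-curvature lower bound. Once this input is accepted, the rest is a short manipulation with Jensen's inequality and the $H_n$-center concentration.
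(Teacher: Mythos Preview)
The paper does not supply its own proof of this lemma; it is simply quoted from \cite{Bam20a} (Bamler's Theorem~6.2), so there is nothing in the paper to compare against directly. Your architecture --- concentration of $\nu$ at the $H_n$-center, the rewriting $\vol(B)=(4\pi r^2)^{n/2}\int_B e^f\,d\nu$, and Jensen's inequality --- is correct and is in fact exactly how Bamler begins. The difficulty, as you correctly isolate, is the lower bound on $\bar f_B$.

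There is however a genuine gap in the way you obtain that lower bound. You want the pointwise estimate $f(y,-r^2)\ge \cN^*_{-r^2}(x_0,0)-C_0$, i.e.\ the heat-kernel upper bound $K\le C\tau^{-n/2}e^{-\cN}$. Your stated justification --- ``monotonicity of Perelman's $\mu$-functional together with log-Sobolev-to-heat-kernel duality'' --- only yields the weaker bound $K\le C\tau^{-n/2}e^{-\mu}$, hence $f\ge \mu-C_0$. Since $\cN\ge\mu$ (this is (\ref{NM})), the inequality goes the wrong way: $f\ge\mu-C_0$ does \emph{not} give $f\ge\cN-C_0$, and plugging into Jensen you would only conclude $\vol(B)\ge c\,e^{\mu}r^n$, which is strictly weaker than the stated lemma. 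The sharper bound $K\le C\tau^{-n/2}e^{-\cN}$ is Bamler's Theorem~7.1/7.2 --- exactly the lemma stated \emph{after} this one in the paper --- and in \cite{Bam20a} its proof \emph{uses} Theorem~6.2, so invoking it here is circular.

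Bamler's non-circular route to $\bar f_B\ge\cN-C$ is different: from $\cW\le\cN$ for the conjugate heat kernel together with $R\ge R_{\min}$ one gets $\int_M|\nabla f|^2\,d\nu\le C(R_{\min}r^2)\,r^{-2}$; the Hein--Naber Poincar\'e inequality for $d\nu$ then gives $\int_M(f-\bar f)^2\,d\nu\le C(R_{\min}r^2)$ with $\bar f=\cN+\tfrac n2$; finally Cauchy--Schwarz on $B$ yields $\bigl|\nu(B)^{-1}\!\int_B f\,d\nu-\bar f\bigr|\le C$, hence $\bar f_B\ge\cN-C$. With this replacement your Jensen computation goes through verbatim. (For the specific application in this paper, where only a uniform lower bound $\cN\ge\mu\ge -C$ is used, your weaker conclusion $\vol(B)\ge c\,e^{\mu}r^n$ would in fact suffice; but it does not establish the lemma as stated.)
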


Next, we have the heat kernel upper bound estimate below proved in \cite[Theorem 7.2]{Bam20a}.
\begin{lemma} 
Let $(M, g(t))_{t\in I}$ be a solution of the Ricci flow. Suppose that on $M \times [s, t]$,
$$[s, t]\subset I, ~ R\geq R_{min}.$$
Let $(z, s) \in M\times I$ be an $H_n$-center of $(x, t)\in M\times I$. Then there exist $C=C(R_{min}(t-s))<\infty$, such that for any $y\in M$, we have
\begin{equation}
K(x, t; y,s) \leq C (t-s)^{-n/2} \exp\left( -\cN^*_{s}(x, t) \right) \exp\left( - \frac{ d^2_s(z, y)}{C (t-s)} \right) .
\end{equation}

\end{lemma}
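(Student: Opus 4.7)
The strategy is in two stages: first obtain a Nash-entropy on-diagonal heat kernel upper bound with no Gaussian factor, and then upgrade it to the claimed Gaussian estimate by exploiting the concentration of the conjugate heat measure around the $H_n$-center.

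For the first stage I would establish the on-diagonal bound
$$K(x,t;y,s)\,\le\, C_1(t-s)^{-n/2}\exp\bigl(-\cN^*_s(x,t)\bigr), \qquad y\in M.$$
Writing $d\nu_{x,t;s} = (4\pi\tau)^{-n/2}e^{-f}dg(s)$ with $\tau = t-s$, the definition of the pointed Nash entropy reads $\cN^*_s(x,t) = \int f\, d\nu_{x,t;s} - n/2$, i.e.\ it controls the $\nu$-average of $f = -\log K - \tfrac{n}{2}\log(4\pi\tau)$. Combining this identity with the inequality $\cN^*_s(x,t) \ge \mu[g(s),\tau]$ from (\ref{NM}) and invoking Perelman's logarithmic Sobolev inequality (whose constant is governed by $\mu$), one converts integrated control of $f$ into a pointwise $L^\infty$ control on $K(x,t;\cdot,s)$. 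The scalar lower bound $R \ge R_{\min}$ enters to absorb the $R$-term in Perelman's $\mathcal{W}$-functional, yielding the dependence $C_1 = C_1(R_{\min}\tau)$.

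For the second stage, I would insert the Gaussian weight via the reproducing identity
$$K(x,t;y,s) = \int_M K(x,t;y',s')\,K(y',s';y,s)\,dg(s')(y'),$$
at a well-chosen intermediate time $s' \in (s,t)$. Applying the first-stage bound to $K(x,t;y',s')$ and using Lemma \ref{hnc'2} to localize the probability measure $\nu_{x,t;s'}$ near its $H_n$-center $z'$ of $(x,t)$ at time $s'$, the main contribution to the integral comes from $y'$ close to $z'$, while the tail contribution is controlled by the sub-Gaussian concentration of Lemma \ref{hnc'2}. A bootstrap (either through a sequence of intermediate times $s_k \downarrow s$, or through a weighted semigroup estimate in the spirit of Davies' method) then upgrades these tail bounds into the full Gaussian factor $\exp(-d_s^2(z,y)/(C(t-s)))$.

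The main obstacle is the bookkeeping for moving $H_n$-centers: at each intermediate time $s_k$ we use a center $z_k$ of $(x,t)$ and the accumulated concentration is naturally measured at time $s_k$, whereas the statement insists on $d_s(z,y)$ evaluated at the single time $s$ and center $z$. Passing between these requires a distance-distortion estimate under the flow; the scalar curvature lower bound $R\ge R_{\min}$ suffices for the direction we need, and ensures that the $H_n$-centers at neighbouring time slices lie within $O(\sqrt{t-s})$ of one another. Ensuring that the final constant depends only on $R_{\min}(t-s)$ (and not on $n$-independent geometric data) requires a scale-invariant tracking of the geometric series of gains produced at each step of the bootstrap.
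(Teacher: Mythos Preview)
The paper does not prove this lemma: it is quoted verbatim as ``the heat kernel upper bound estimate below proved in \cite[Theorem 7.2]{Bam20a}'' and no argument is given. Your proposal therefore goes beyond what the paper does, and the relevant comparison is with Bamler's actual proof in \cite{Bam20a}.

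Your two–stage outline (on--diagonal bound, then Gaussian upgrade via the reproducing formula and $H_n$--center concentration) matches the architecture of Bamler's argument. Two remarks on the details. First, Bamler's on--diagonal bound (his Theorem~7.1) does not go through the log--Sobolev inequality as you suggest; it uses instead a scale--invariant gradient estimate $|\nabla_y \log K(x,t;y,s)|\le C(R_{\min}\tau)\,\tau^{-1/2}$, which together with $\int K\,dg=1$ and the Nash entropy control of $\int f\,d\nu$ yields the pointwise bound directly. Your log--Sobolev route may be workable but is not the proof in \cite{Bam20a}.

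Second, and more substantively, your handling of the ``moving $H_n$--centers'' obstacle contains a gap. A scalar curvature lower bound does \emph{not} provide a Riemannian distance--distortion estimate in either direction along the Ricci flow; that would require control of the Ricci tensor, which is unavailable here. Bamler circumvents this entirely: rather than comparing $d_{s_k}(z_k,\cdot)$ with $d_s(z,\cdot)$ via distance distortion, he uses the monotonicity of the $W_1$--Wasserstein distance between conjugate heat kernel measures under the flow (his Lemma~2.7) together with the variance bound in the definition of an $H_n$--center. This Wasserstein monotonicity is the mechanism that transports the concentration from intermediate times down to time $s$, and it is what makes the final constant depend only on $R_{\min}(t-s)$. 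Replacing your distance--distortion step by this argument would close the gap.
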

The following estimate relating the $W_1$-distance to the $\mathcal{L}$-length follows from the above heat kernel upper bound estimate and Perelman's Harnack inequality \cite[Lemma 21.2]{Bam20c}. 
\begin{lemma} \label{wdn} 
Let $(M, g(t))_{t\in (-T, 0)}$ be a solution of the Ricci flow for some $T>0$. Suppose $(s, t) \subset (-T, 0)$ with $s>-T+\epsilon>0$ for some $\epsilon>0$.  Let  $\gamma: [0, t-s] \rightarrow M \times (-T, 0)$ be a $C^1$ spacetime curve with 
$$\gamma(\tau) \in M \times \{ t-\tau\}, ~~\gamma(0)=x, ~\gamma(t-s) = y. $$
Then there exists $C=C(\epsilon)>0$ such that
\begin{equation}
d^{g(s)}_{W_1} (\delta_{y, s}, \nu_{x,t; s}) \leq C \left(1+ \frac{\cL(\gamma)}{2(t-s)^{1/2}} -\cN^*_s(x, t)  \right) ^{1/2} (t-s)^{1/2}.
\end{equation}
\end{lemma}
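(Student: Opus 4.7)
The plan is to combine three ingredients: Perelman's differential Harnack inequality for the conjugate heat kernel (which yields a lower bound on $K(x,t;y,s)$ in terms of $\cL(\gamma)$), the Gaussian upper bound from the preceding lemma (centered at an $H_n$-center $z$), and the triangle inequality for the Wasserstein $W_1$-distance. The hypothesis $s > -T + \ep$ is used so that all scalar-curvature-dependent error constants are bounded uniformly in $\ep$ alone.

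First, I would invoke Perelman's differential Harnack for the conjugate heat kernel \cite[Lemma 21.2]{Bam20c} and integrate it along the given $C^1$ spacetime curve $\gamma$. Because $\gamma$ is parametrized by the backward time $\tau = t - t'$ with $\gamma(0)=x$ and $\gamma(t-s)=y$, this produces the pointwise lower bound
\[
K(x,t;y,s) \geq (4\pi(t-s))^{-n/2}\exp\!\left(-\frac{\cL(\gamma)}{2(t-s)^{1/2}}\right).
\]
Next, choose an $H_n$-center $(z,s)$ of $(x,t)$. The heat kernel upper bound lemma applied to $K(x,t;y,s)$ reads
\[
K(x,t;y,s) \leq C(t-s)^{-n/2}\exp(-\cN^*_s(x,t))\exp\!\left(-\frac{d^2_s(z,y)}{C(t-s)}\right),
\]
with $C$ depending on $R_{\min}(t-s)$. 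Comparing these two bounds and taking logarithms gives, after absorbing an additive $O_\ep(1)$ constant,
\[
d_s(z,y) \leq C(\ep)(t-s)^{1/2}\left(1 + \frac{\cL(\gamma)}{2(t-s)^{1/2}} - \cN^*_s(x,t)\right)^{1/2}.
\]
The triangle inequality for $W_1$ together with (\ref{dvh}) then yields
\[
d^{g(s)}_{W_1}(\delta_y,\nu_{x,t;s}) \leq d_s(y,z) + d^{g(s)}_{W_1}(\delta_z,\nu_{x,t;s}) \leq d_s(y,z) + \sqrt{H_n(t-s)},
\]
and combining with the previous inequality gives the claimed bound.

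The main obstacle is uniformity of the constant $C(\ep)$. Both the heat kernel upper bound and Perelman's Harnack carry error terms of the form $R_{\min}(t-s)$, which could degenerate as $s \to -T$. The assumption $s > -T + \ep$, together with the universal scalar curvature lower bound $R \geq -n/(2(t'+T))$ valid on any Ricci flow on $(-T, 0]$, forces $R_{\min}(t-s)$ to be bounded by a function of $\ep$ only, which is precisely what renders the final constant $\ep$-dependent. A subtler point is that the expression $1 + \cL(\gamma)/(2(t-s)^{1/2}) - \cN^*_s(x,t)$ inside the square root must remain nonnegative; this follows from the universal comparison between the infimum of the $\cL$-length (the reduced distance) and the Nash entropy, with any harmless slack absorbed into $C(\ep)$.
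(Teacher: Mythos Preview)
Your proposal is correct and follows precisely the route the paper indicates: the paper does not spell out a proof but states that the lemma ``follows from the above heat kernel upper bound estimate and Perelman's Harnack inequality \cite[Lemma 21.2]{Bam20c},'' which is exactly the comparison you carry out, followed by the triangle inequality through an $H_n$-center.
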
 

Finally, let us recall the following result of Perelman in \cite{Per1}.

\begin{lemma} \label{lcenter} 
Let $(M, g(t))_{t\in (-T, 0)}$ be a solution of the Ricci flow for some $T>0$. Suppose $[s, t] \subset (-T, 0)$. Then for any $x\in M$, there exists a point $y\in M$, such that
$$ \ell_{(x, t)} (y, s) \leq \frac{n}{2}.$$
\end{lemma}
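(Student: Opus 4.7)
The statement is Perelman's classical bound on the spatial minimum of the reduced distance, and my plan is to follow his original argument based on the first and second variations of $\cL$-length.

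Fix the base point $(x, t)$ and write $\tau = t - s' \in (0, t - s]$ for the backward time. The reduced distance $\ell(y, \tau) := \ell_{(x, t)}(y, t - \tau)$ is Lipschitz continuous on $M \times (0, t - s]$ and smooth off the $\cL$-cut locus of $(x, t)$. At any smooth point the first variation of $\cL$-length at the endpoint gives $\nabla L = 2\sqrt{\tau}\,X$ (with $X$ the velocity of the minimizing $\cL$-geodesic at $y$), and differentiating the upper limit of integration combined with the $\cL$-geodesic equation yields the identity
$$L_\tau(y, \tau) = \sqrt{\tau}\,(R - |\nabla \ell|^2), \qquad \text{equivalently,} \qquad 2 \ell_\tau + |\nabla \ell|^2 - R + \frac{\ell}{\tau} = 0.$$
Perelman's second variation of $\cL$-length produces the trace inequality
$$2 \Delta \ell - |\nabla \ell|^2 + R + \frac{\ell - n}{\tau} \leq 0,$$
valid in the barrier sense on all of $M \times (0, t-s]$.

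Consider the spatial minimum $\ell_{\min}(\tau) := \min_{y \in M} \ell(y, \tau)$, which is attained at some $y_\tau \in M$ by compactness. At $y_\tau$ one has $|\nabla \ell|^2 = 0$ and $\Delta \ell \geq 0$ in the barrier sense. Adding the identity and the trace inequality at $y_\tau$ and dropping the nonnegative term $2\Delta \ell$ leaves $2 \ell_\tau(y_\tau) + 2 \ell_{\min}/\tau \leq n/\tau$. Since $(\tau \ell_{\min})'(\tau) = \ell_{\min} + \tau \ell_\tau(y_\tau)$ at the spatial minimizer, this rearranges to
$$\frac{d}{d\tau}\bigl(\tau \, \ell_{\min}(\tau)\bigr) \leq \frac{n}{2}$$
in the barrier sense. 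The trivial constant curve $\gamma \equiv x$ gives $\ell(x, \tau) = \frac{1}{2\sqrt{\tau}} \int_0^\tau \sqrt{\sigma}\, R(x, t - \sigma)\,d\sigma = O(\tau)$ as $\tau \to 0^+$, so $\tau\,\ell_{\min}(\tau) \to 0$ as $\tau \to 0^+$. Integrating the barrier ODE therefore yields $\tau\,\ell_{\min}(\tau) \leq n\tau/2$, i.e., $\ell_{\min}(\tau) \leq n/2$ for all $\tau \in (0, t - s]$; taking $y = y_\tau$ at $\tau = t - s$ completes the proof.

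The main subtlety is to justify the barrier ODE when $y_\tau$ lies on the $\cL$-cut locus of $(x, t)$, where $\ell$ itself is not smooth. I handle this in the standard way: fixing a minimizing $\cL$-geodesic from $(x, t)$ to $(y_\tau, t - \tau)$ and letting its endpoint vary produces a smooth upper barrier $\bar \phi \geq \ell$ agreeing with $\ell$ at $y_\tau$, along which both the identity and the trace inequality hold pointwise, and the maximum-principle derivation of the ODE for $\ell_{\min}$ goes through unchanged.
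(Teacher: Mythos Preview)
Your argument is Perelman's original proof and is correct. The paper does not actually prove this lemma: it is stated as a quoted result (``let us recall the following result of Perelman in \cite{Per1}'') and used as a black box, so there is no in-paper proof to compare against. Your reconstruction of the standard $\bar L$-monotonicity argument---combining the first-variation identity $2\ell_\tau + |\nabla\ell|^2 - R + \ell/\tau = 0$ with the traced second-variation inequality $2\Delta\ell - |\nabla\ell|^2 + R + (\ell-n)/\tau \le 0$, evaluating at the spatial minimum, and integrating $(\tau\ell_{\min})' \le n/2$ from $\tau=0$---is exactly what Perelman does in \cite[\S7.1]{Per1}, and the barrier remark for the cut locus is the usual fix.
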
 

We define the point $(y,s)$ to be an $\ell_n$-center of $(x,t)$ in Lemma \ref{lcenter}.


\medskip
\subsection{Proof of Theorem \ref{main1}}

We can now prove Theorem \ref{main1}.
\begin{proof}
Throughout this proof, all the constants will at most depend on $n, \omega_0, B$. Let $t_0 \in [1/2, 1)$ be a given time,  assume we have a positive function $v \in C^{1}\left(X\times [t_0-(1-t_0), t_0] \right)$ satisfying  
\begin{equation}\label{main1LY2}
\frac{ |\partial_t v |}{ v } + \frac{|\nabla v |^2}{ v^2 } \leq \frac{B}{1-t},
\end{equation}
on $X\times [t_0 - (1-t_0 ), t_0]$. We need to show that
$$\frac{ \sup_{X} v ( \cdot, t_0 ) }{ \inf_{X} v ( \cdot, t_0 ) } \leq C.$$
Denote by $A>0$ a positive constant to be determined in the course of the proof. We denote by
$$ f = \ln v : X\times [t_0-(1-t_0), t_0]\to \mathbb{R}, $$
which is also a $C^1$-function. By the assumption (\ref{main1LY}), we have
\begin{equation} \label{geohatu3}
|\partial_t f| + |\nabla f|^2  \leq \frac{B}{1-t}, 
\end{equation}
on $X\times [t_0-(1-t_0), t_0]$. Let $N\geq 0$ be the positive integer such that
\begin{equation} \label{loov}
10NA\leq \sup_{X\times \left\{t_0\right\}} f -\inf_{X\times \left\{t_0\right\}} f \leq 10(N+1)A .
\end{equation}
We only need to obtain an upper bound of $N$. 
We denote by $r_0:=(1-t_0)^{1/2}$. We set, for $k=1, 2, \dots, N$,
$$
A_k:=\inf_{X\times \left\{t_0\right\}} f+k\cdot 10A,
$$ 
hence by the continuity of $f$, for each $k=1, 2, \dots, N$, we can find points $x_k\in X$ such that 
\begin{equation}\label{Equ: def of xk}
f(x_k, t_0)=A_k.
\end{equation}
Next, for each $(x_k, t_0)$, due to Lemma \ref{lcenter} (see also \cite[Section 7]{Per1}), we can find an $\ell_{2n}$-center $(y_k, t_0-r_0^2)$ of $(x_k, t_0)$. That is, we have the following reduced length estimate
$$
\ell_{(x_k, t_0)}\left(y_k, t_0-r_0^2\right)\leq n.
$$
By definition, this means that we can find a smooth spacetime curve $\gamma_k:[0, r_0^2]\to X\times [0, 1)$ connecting $(x_k, t_0)$ to $(y_k, t_0-r_0^2)$ such that
$$
\frac{\mathcal{L}(\gamma_k)}{2\sqrt{t_0-(t_0-r_0^2)}}\leq n,
$$
which we can rewrite as
\begin{equation}\label{Equ: upper bound on L length of gamma k}
\mathcal{L}(\gamma_k)\leq 2nr_0.
\end{equation}
For $\tau\in [0, r_i^2]$, we can compute
$$
\frac{d}{d\tau}f(\gamma_k(\tau), t_0-\tau)=\left\langle\nabla f(\gamma_k(\tau), t_0-\tau), \gamma_k'(\tau)\right\rangle_{g(t_0-\tau)}-\partial_t f(\gamma_k(\tau), t_0-\tau).
$$
Hence we can compute
\begin{equation}\label{Equ: difference of u}
\begin{split}
&\left| f(y_k, t_0-r_0^2)- f(x_k, t_0) \right|  \\
=& \left|\int_{0}^{r_0^2}\frac{d}{d\tau} f(\gamma_k(\tau), t_0-\tau)d\tau\right|  \\
%
\leq& \left|\int_{0}^{r_0^2}\left\langle\nabla f(\gamma_k(\tau), t_0-\tau), \gamma_k'(\tau)\right\rangle_{g(t_0-\tau)}d\tau\right|+\left|\int_{0}^{r_0^2}\partial_t f(\gamma_k(\tau), t_0-\tau)d\tau\right|.
\end{split}
\end{equation}
For the second term, from the time derivative estimate in (\ref{geohatu3}), we can compute
\begin{equation}\label{Equ: estimate of second term of the difference of u}
\left|\int_{0}^{r_0^2}\partial_t f(\gamma_k(\tau), t_0-\tau)d\tau\right|\leq \int_{0}^{r_0^2} \frac{B}{1-t_0} d\tau=B.
\end{equation}
For the first term, combining the gradient estimate in (\ref{geohatu3}), (\ref{Equ: upper bound on L length of gamma k}), and the fact that $R(t)\geq -C$ for all $t\in [0, 1)$, we can compute
\begin{equation}\label{Equ: estimate of first term of the difference of u}
\begin{split}
&\left|\int_{0}^{r_0^2}\left\langle\nabla f(\gamma_k(\tau), t_0-\tau), \gamma_k'(\tau)\right\rangle_{g(t_0-\tau)}d\tau\right| \\
\leq& \int_{0}^{r_0^2}\left|\nabla f\right|_{g(t_0-\tau)}(\gamma_k(\tau), t_0-\tau)\cdot\left|\gamma_k'(\tau)\right|_{g(t_0-\tau)}d\tau \\
\leq& Cr_0^{-1}\int_{0}^{r_0^2}\left|\gamma_k'(\tau)\right|_{g(t_0-\tau)}d\tau \\
\leq& Cr_0^{-1}\left(\int_{0}^{r_0^2}\tau^{-\frac{1}{2}}d\tau\right)^{\frac{1}{2}}\cdot\left(\int_{0}^{r_0^2}\tau^{\frac{1}{2}}\left|\gamma_k'(\tau)\right|_{g(t_0-\tau)}^2d\tau\right)^{\frac{1}{2}} \\
%
\leq& Cr_0^{-\frac{1}{2}}\left(\int_{0}^{r_0^2}\tau^{\frac{1}{2}}\left(\left|\gamma_k'(\tau)\right|_{g(t_0-\tau)}^2+R(\gamma_k(\tau), t_0-\tau)+C\right)d\tau\right)^{\frac{1}{2}} \\
\leq& Cr_0^{-\frac{1}{2}}\left(\mathcal{L}(\gamma_k)+\int_{0}^{r_0^2}Cd\tau\right)^{\frac{1}{2}} \\
\leq& Cr_0^{-\frac{1}{2}}\left(2nr_0+C(n)r_0^2\right)^{\frac{1}{2}} \leq C.\\
\end{split}
\end{equation}
Plugging (\ref{Equ: estimate of second term of the difference of u}) and (\ref{Equ: estimate of first term of the difference of u}) into (\ref{Equ: difference of u}), we obtain
\begin{equation}\label{Equ: estimate of difference of u}
\left|f(y_k, t_0-r_0^2) - f(x_k, t_0)\right|\leq C_1,
\end{equation}
for each $k=1, 2, \dots, N$. We then have the following claim.
\begin{claim}\label{claim: lower bound on the distance of yks}
If we choose $A$ large enough so that $A\geq C_1$, then for any $k, l\in\left\{1, 2, \dots, N\right\}$, $k\neq l$, we have
\begin{equation}\label{Equ: lower bound on the distance of yks}
d_{g(t_0-r_0^2)}(y_k, y_l)\geq B^{-\frac{1}{2}}Ar_0.
\end{equation}
\end{claim}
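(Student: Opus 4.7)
The plan is to combine the oscillation bound (\ref{Equ: estimate of difference of u}) that has just been established between the base points $(x_k,t_0)$ and their $\ell_n$-centers $(y_k, t_0 - r_0^2)$ with the pointwise gradient estimate from (\ref{geohatu3}) at the time slice $t = t_0 - r_0^2$. Without loss of generality assume $k > l$. Then by (\ref{Equ: def of xk}) we immediately have $f(x_k,t_0) - f(x_l,t_0) = 10A(k-l) \geq 10A$. Applying (\ref{Equ: estimate of difference of u}) to both indices $k$ and $l$ and choosing $A \geq C_1$ at the outset, the triangle inequality yields
$$f(y_k, t_0 - r_0^2) - f(y_l, t_0 - r_0^2) \;\geq\; 10A - 2C_1 \;\geq\; 8A.$$
Thus the task reduces to converting this quantitative difference in $f$-values into a quantitative lower bound on the distance $d_{g(t_0-r_0^2)}(y_k,y_l)$.

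For this conversion I would use the spatial gradient estimate in (\ref{geohatu3}) at the specific time $t = t_0 - r_0^2$. Since $r_0^2 = 1 - t_0$, we have $1 - t = 2r_0^2$ at this moment, hence
$$\sup_X |\nabla f|_{g(t_0 - r_0^2)} \;\leq\; \left(\frac{B}{2 r_0^2}\right)^{1/2} \;=\; \frac{B^{1/2}}{\sqrt{2}\, r_0}.$$
Integrating $|\nabla f|$ along any minimizing $g(t_0-r_0^2)$-geodesic joining $y_k$ and $y_l$ then gives
$$f(y_k, t_0 - r_0^2) - f(y_l, t_0 - r_0^2) \;\leq\; \frac{B^{1/2}}{\sqrt{2}\, r_0}\, d_{g(t_0 - r_0^2)}(y_k, y_l).$$
Combining this with the lower bound $8A$ derived above yields $d_{g(t_0-r_0^2)}(y_k,y_l) \geq 8\sqrt{2}\, B^{-1/2} A r_0$, which is strictly stronger than the desired estimate (\ref{Equ: lower bound on the distance of yks}).

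The claim is therefore essentially a two-step consequence of earlier work and there is no serious geometric obstacle; the only point requiring a little care is verifying that the constant $C_1$ appearing in (\ref{Equ: estimate of difference of u}) is genuinely independent of $k$. Inspection of (\ref{Equ: estimate of second term of the difference of u}) and (\ref{Equ: estimate of first term of the difference of u}) shows this is indeed the case, since those bounds rely only on the universal hypothesis (\ref{main1LY}), the uniform scalar curvature lower bound along (\ref{unkrflow1}), and the $\ell_n$-center length bound (\ref{Equ: upper bound on L length of gamma k}), none of which sees the index $k$. So the threshold $A \geq C_1$ can be fixed once and for all independently of $N$, which is exactly what is needed to use this claim in the subsequent volume-comparison argument.
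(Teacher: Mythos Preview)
Your proof is correct and follows essentially the same approach as the paper: both combine the separation $|f(x_k,t_0)-f(x_l,t_0)|\geq 10A$, the transfer estimate (\ref{Equ: estimate of difference of u}), and the spatial gradient bound from (\ref{geohatu3}) at $t=t_0-r_0^2$. The only cosmetic difference is that the paper argues by contradiction (assuming $d_{g(t_0-r_0^2)}(y_k,y_l)\leq B^{-1/2}Ar_0$ and deriving $|f(x_k,t_0)-f(x_l,t_0)|\leq 3A$), whereas you proceed directly and obtain the sharper constant $8\sqrt{2}$; the underlying logic is identical.
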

\begin{proof} 
Indeed, assmue for some $k\ne l\in\left\{1, 2, \dots, N\right\}$, we have
$$
d_{g(t_0-r_0^2)}(y_k, y_l)\leq B^{-\frac{1}{2}}Ar_0.
$$
Then, from the gradient estimate in (\ref{geohatu3}), we have
$$
\left|f(y_k, t_0-r_0^2)-f(y_l, t_0-r_0^2)\right|\leq B^{\frac{1}{2}}r_0^{-1}d_{g(t_0-r_0^2)}(y_k, y_l)\leq B^{\frac{1}{2}}r_0^{-1}\cdot B^{-\frac{1}{2}}Ar_0=A.
$$
Hence applying (\ref{Equ: estimate of difference of u}) and triangle inequality, we have
\[
\begin{split}
& \left|f(x_k, t_0) -f(x_l, t_0)\right| \leq  \left|f(x_k, t_0)-f(y_k, t_0-r_0^2)\right| \\
&\qquad\qquad + \left|f(y_k, t_0-r_0^2)-f(y_l, t_0-r_0^2)\right|+\left|f(y_l, t_0-r_0^2)-f(x_l, t_0)\right|\\
&\qquad\qquad \leq C_1+A+C_1\leq 3A . \\
\end{split}
\]
This is impossible since from (\ref{Equ: def of xk}), we have
$$
\left|f(x_k, t_0)-f(x_l, t_0)\right|=|A_k-A_l|=|k-l|\cdot 10A\geq 10A.
$$
This contradiction proves the claim.
\end{proof}
Now, for each $k=1, 2, \dots, N$, choose $(z_k, t_0-r_0^2)$ be the $H_{2n}$-center of $(x_k, t_0)$. Then from the definition of the $H_{2n}$-center, we have
\begin{equation}\label{Equ: estimate of W_1 distance 1}
\begin{split}
&d^{g(t_0-r_0^2)}_{W_1}\left(\nu_{x_k, t_0; t_0-r_0^2}, \delta_{z_k}\right) \leq \sqrt{\var\left(\nu_{x_k, t_0; t_0-r_0^2}, \delta_{z_k}\right)} \\
&\qquad\qquad \leq \sqrt{H_{2n}(t_0-(t_0-r_0^2))} \leq Cr_0.
\end{split}
\end{equation}
Next, from Lemma \ref{wdn}, the fact that we have uniform $\mu$-entropy lower bound, and (\ref{Equ: upper bound on L length of gamma k}), we can estimate
\begin{equation}\label{Equ: estimate of W_1 distance 2}
\begin{split}
&d^{g(t_0-r_0^2)}_{W_1}\left(\nu_{x_k, t_0; t_0-r_0^2}, \delta_{y_k}\right)\\
\leq& C\left(1+\frac{\mathcal{L}(\gamma_k)}{2\sqrt{t_0-(t_0-r_0^2)}}-\cN_{x_k, t_0}(t_0-(t_0-r_0^2))\right)^{\frac{1}{2}}\sqrt{t_0-(t_0-r_0^2)}\\
\leq&  Cr_0.
\end{split}
\end{equation}
Hence by triangle inequality, combining (\ref{Equ: estimate of W_1 distance 1}) and (\ref{Equ: estimate of W_1 distance 2}), we have
\begin{equation}\label{Equ: estimate of W_1 distance 3}
\begin{split}
&d_{g(t_0-r_0^2)}\left(y_k, z_k\right) = d^{g(t_0-r_0^2)}_{W_1}\left(\delta_{y_k}, \delta_{z_k}\right)\\
& \qquad \leq d^{g(t_0-r_0^2)}_{W_1}\left(\delta_{y_k}, \nu_{x_k, t_0; t_0-r_0^2} \right)+d^{g(t_0-r_0^2)}_{W_1}\left(\nu_{x_k, t_0; t_0-r_0^2}, \delta_{z_k}\right)\\
& \qquad \leq  Cr_0.
\end{split}
\end{equation}
Hence, for some constant $C_2<\infty$, we have
\begin{equation}\label{Equ: containment of balls}
B_{g(t_0-r_0^2)}\left(z_k, \sqrt{2H_{2n}}\cdot r_0\right)\subset B_{g(t_0-r_0^2)}\left(y_k, C_2r_0\right),
\end{equation}
Due to Claim \ref{claim: lower bound on the distance of yks}, if we choose $A$ sufficiently large such that 
$$
B^{-\frac{1}{2}}A>2C_2,
$$
then we must have the $N$-balls
$$
\left\{B_{g(t_0-r_0^2)}\left(y_k, C_2r_0\right)\right\}_{k=1,2,\dots,N},
$$
are mutually disjoint, and combining with (\ref{Equ: containment of balls}), we must have the $N$-balls
$$
\left\{B_{g(t_0-r_0^2)}\left(z_k, \sqrt{2H_{2n}}\cdot r_0\right)\right\}_{k=1,2,\dots,N},
$$
are mutually disjoint. But due to Bamler's volume non-collapsing estimate on balls around the $H_{2n}$-center, say Lemma \ref{hcv}, we have
$$
\vol_{g(t_0-r_0^2)}\left(B_{g(t_0-r_0^2)}\left(z_k, \sqrt{2H_{2n}}\cdot r_0\right)\right)\geq c(n)\exp\left(\cN_{x_k, t_0}(r_0^2)\right)r_0^{2n}\geq \kappa r_0^{2n},
$$
for each $k=1, 2, \dots, N$, for some constant $\kappa>0$. Hence we have 
\begin{equation} \label{zball4}
\begin{split}
\vol_{g(0)}\left(X\right) \cdot (2r_0^2)^{n} &= \vol_{g(t_0-r_0^2)} \left( X \right) \\
&\geq \sum_{k=1}^{N} \vol_{g(t_0-r_0^2)}\left( B_{g(t_0-r_0^2)}\left(z_k, \sqrt{2H_{2n}} \cdot r_0\right) \right)  \\
&\geq N \kappa r_0^{2n},
\end{split}
\end{equation}
hence we conclude that $N\leq C$, which completes the proof.
\end{proof}
%


\section{The Ricci potential}\label{ripp}

In this section, we will reduce the K\"ahler-Ricci flow to a complex Monge-Amp\`ere flow and define the Ricci potential for the evolving K\"ahler metrics. 

Let $\omega_0$ and $\tilde \omega(s)$ be the K\"ahler forms associated to the K\"ahler metric $g_0$ and $\tilde g(s)$ in (\ref{nkrflow1}). There exists a smooth volume form  $\Omega$  such that
$$-\ddbar \log \Omega = \omega_0$$
since $\omega_0\in c_1(X)$. Then  (\ref{nkrflow1}) can be reduced to the following complex Monge-Amp\`ere flow  
\begin{equation}\label{maflow2}
\left\{
\begin{array}{l}
{ \displaystyle \dds{\varphi} = \log \frac{ (\omega_0 + \ddbar \varphi )^n }{\Omega} + \varphi, ~~ s\in [0, \infty), }\\
\\
\varphi|_{ s=0} = 0,
\end{array} \right. 
\end{equation}
where $\tilde \omega(s)= \omega_0 + \ddbar \varphi(\cdot, s)$.
As in \cite{SeT},  we denote the Ricci potential by 
\begin{equation} \label{defu0}
u_0 = \dds{\varphi}
\end{equation}
for the flow (\ref{nkrflow1}) on $X\times [0, \infty)$.  $u_0$ satisfies the following coupled equations
\begin{equation}\label{ceofu0}
\left\{
\begin{array}{l}
\dds{} u_0 = \Delta u_0 + u_0=n- R_{\tilde g}(s) + u_0 , \\
\\
\ric (\tilde\omega(s)) = \tilde\omega(s) - \ddbar u_0 , 
\end{array} \right. 
\end{equation}
where $\Delta$ is the Laplacian associated to $\tilde g$.  Standard computation gives the evolution equations for $u_0$. 
\begin{equation} \label{eogu0}
\left( \dds{} - \Delta \right) |\nabla u_0|^2 = |\nabla  u_0|^2 - |\nabla\nabla u_0|^2 - |\nabla\overline{\nabla} u_0|^2 ,
\end{equation}
\begin{equation} \label{eolu0}
\left( \dds{} - \Delta \right) \Delta u_0 = \Delta u_0 - |\nabla\overline{\nabla} u_0|^2 .
\end{equation}


 
 \smallskip
 
\section{A new normalization}\label{anewnorm}

In this section, we propose a new normalization for the Ricci potential $u_0$, which is different from Perelman's original normalization (cf. Section \ref{Pernorm}). We then prove the similar gradient and Laplacian estimates as those in Perelman's original proof. In \cite{JST23a}, we extend this normalization to general finite time solution of K\"ahler-Ricci flow on K\"ahler manifolds.
We will work with the normalized flow  (\ref{nkrflow1}) with $(X, \tilde \omega(s))$ and $s\in [0, \infty)$ as well as its corresponding Monge-Amp\`ere flow (\ref{maflow2}).

We start with the Ricci potentail $u_0$ defined in (\ref{defu0})  defined for the normalized flow. For $s\in [0, \infty)$, we denote by
\begin{equation}\label{deofa}
a(s) = \inf_{X} u_0 ( \cdot , s) ,
\end{equation}
which is a continuous function. 
We then define the normalized Ricci potential by 
\begin{equation}\label{deofu}
u = u_0 - a(s) + 1 .
\end{equation}
We will prove the same gradient and Laplace estimates for our new normalization as those of Perelman by his original normalization.
\begin{proposition} \label{gandleou2}
There exists $C=C(n, \omega_0)>0$, such that
\begin{equation} \label{geou2}
\frac{ |\nabla u|^2}{ u } \leq C, 
\end{equation}
\begin{equation} \label{leou2}
\frac{ |\Delta u| }{ u } \leq C. 
\end{equation}
on $X\times [0, \infty)$.
\end{proposition}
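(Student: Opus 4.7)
The plan is to apply the parabolic maximum principle in the spirit of Perelman's original proof, with the pointwise lower bound $u\geq 1$ from the new normalization playing the role that Perelman's entropy-based lower bound on the Ricci potential plays in \cite{SeT}. First, three preparatory facts. Since $a(s)$ depends only on $s$, we have $\nabla u = \nabla u_0$ and $\Delta u = \Delta u_0$, so the evolution equations (\ref{eogu0}) and (\ref{eolu0}) hold verbatim with $u_0$ replaced by $u$, while the equation $(\partial_s-\Delta)u_0 = u_0$ becomes
\[
(\partial_s - \Delta) u \;=\; u + \bigl( a(s) - 1 - a'(s) \bigr).
\]
Applying Hamilton's trick to the Lipschitz function $a(s)=\inf_X u_0(\cdot,s)$: at a point $x(s)$ where the infimum is attained, $\nabla u_0=0$ and $\Delta u_0 \geq 0$, hence
\[
a'(s) \;=\; \partial_s u_0(x(s),s) \;=\; \Delta u_0(x(s),s) + a(s) \;\geq\; a(s),
\]
so the drift $a-1-a'\leq -1$ has a favorable sign. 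Finally, the standard ODE maximum principle for $R$ under (\ref{nkrflow1}) yields $R\geq -C$ uniformly, equivalently $\Delta u = n-R \leq n+C$.

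With these in hand, the gradient estimate (\ref{geou2}) will follow by applying the spacetime maximum principle to an auxiliary function of the form
\[
F \;=\; \frac{|\nabla u|^2}{u} + \gamma\, u
\]
for a suitably small $\gamma>0$ (or a scale-invariant variant such as $|\nabla u|^2/u - \beta\log u$). The key step is to compute $(\partial_s-\Delta)F$ using the three evolution equations together with the Kato-type inequality $\bigl|\nabla(|\nabla u|^2)\bigr|^2 \leq 4|\nabla u|^2\bigl(|\nabla\nabla u|^2 + |\nabla\bar\nabla u|^2\bigr)$, and then exploit $a'\geq a$ at a spacetime maximum to extract the desired bound on $F$. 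The Laplacian estimate (\ref{leou2}) is then obtained by a coupled maximum-principle argument for
\[
G \;=\; \frac{\Delta u}{u} + \alpha\, F,
\]
with $\alpha>0$ large enough so that the good term from (\ref{eogu0}) absorbs the $|\nabla\bar\nabla u|^2$ piece that arises in (\ref{eolu0}); this mirrors Perelman's original coupling, with his normalization replaced by ours.

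The main obstacle is arranging the auxiliary function so that the nonstandard drift $(a-1-a')$ and the quadratic second-derivative terms combine favorably at a spacetime maximum. Because $u$ is not yet known to be bounded above, all test quantities must be scale-invariant in $u$, which forces the use of ratios like $|\nabla u|^2/u$ and $\Delta u/u$ rather than $|\nabla u|^2$ and $\Delta u$ directly; this in turn produces a cross term of the form $-(a-1-a')F/u$ in the evolution of $F$, and Hamilton's trick providing $a'\geq a$ is precisely what gives this term the sign needed to close the argument. Apart from this adaptation, the computation proceeds in parallel to Perelman's original maximum principle estimates, with the role of the $\mathcal W$-functional lower bound of $u_0$ in \cite{SeT} replaced by the pointwise identity $\inf_X u(\cdot,s)=1$.
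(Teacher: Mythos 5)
Your overall strategy — run Perelman's maximum principle argument with the scale-invariant quantities $|\nabla u|^2/u$ and $\Delta u/u$, using the pointwise lower bound $u\geq 1$ from the new normalization and controlling the drift coming from $a'(s)$ — is exactly the route the paper takes. However, there is a genuine sign error in the key auxiliary estimate on $a'$, and it is fatal to the argument as written.

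Writing $u=u_0-a(s)+1$, the term that enters the evolution of $\cH=|\nabla u|^2/u$ after the cancellation of $\tfrac{|\nabla u|^2}{u}$ is
\[
\frac{|\nabla u|^2}{u^2}\bigl(\partial_s a - a + 1\bigr) \;=\; -\bigl(a-1-a'\bigr)\,\frac{\cH}{u}.
\]
To close the maximum principle at a spacetime maximum of $\cH$, you must bound this term from \emph{above}, i.e. you need an \emph{upper} bound $\partial_s a \leq a + C$. Your invocation of Hamilton's trick at a minimizer $x(s)$ of $u_0(\cdot,s)$ uses $\Delta u_0(x(s),s)\geq 0$ to conclude $a'(s)\geq a(s)$ — this is a \emph{lower} bound, which makes the cross term $-\bigl(a-1-a'\bigr)\cH/u\geq \cH/u$ positive and unbounded in $\cH$, so the argument cannot close. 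The inequality you need goes the other way, and it comes from the other half of the Hamilton comparison: at the minimizer, $\partial_s u_0 = n - R + u_0 \leq u_0 + C$ using the uniform lower bound $R\geq -C$ (equivalently $\Delta u_0 = n-R \leq n+C$, which you in fact wrote down), and the comparison $a(s+\delta)\leq u_0(x(s),s+\delta)$ then yields $\partial_s a(\tilde s)\leq a(\tilde s)+C$. This is exactly Claim~\ref{FDQofa} in the paper, and with this corrected bound the rest of your computation (Cauchy–Schwarz on $\nabla|\nabla u|^2\cdot\overline\nabla u$, then the coupled maximum principle $\cK = (-\Delta u + C_0)/u + 100\cH$ for the Laplacian estimate) proceeds as you describe.

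A minor additional remark: you do not need the extra $\gamma u$ term in your test function $F=|\nabla u|^2/u+\gamma u$; the paper works with $\cH=|\nabla u|^2/u$ directly. In fact adding $\gamma u$ is counterproductive since its evolution produces another $+\gamma u$ drift and destroys scale invariance, and, as you yourself observe, $u$ is precisely the quantity that is not yet known to be bounded.
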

\begin{proof}
Throughout this proof, all the constants will depend on $n$, $X$ and $\omega_0$.
We first prove the gradient estimate (\ref{geou2}). Since $u\geq 1$ globally on $X\times [0, \infty)$, we can define 
$$ \cH = \frac{ |\nabla u|^2 }{ u } ,$$ 
which is a smooth function on each time-slice and a continuous function in the time. 

Now assume that for some $\tilde s>0$, $\cH$ achieves its maximum on $X\times [0, \tilde s]$ at the point $(\tilde x, \tilde s)$. To avoid technical complications and clarify the main ideas, we assume that $a(s)$ is differentiable at $s=\tilde s$. When $a(s)$ is not differentiable at $s=\tilde s$, see Remark \ref{aisnotC1}. We first derive the estimate for the time derivative of $a$. 
\begin{claim} \label{FDQofa}
There exists a constant $C=C(n, \omega_0)<\infty$, such that
$$
\partial_s a(\tilde s) \leq a(\tilde s) + C .
$$
%
\end{claim} 
\begin{proof}
Let $p_{\tilde s}\in X$ to be the Ricci vertex of $u_0$ at time $\tilde s$. Then we can compute
\begin{equation}
\begin{split}
\lim_{\delta \to 0^+} & \frac{a(\tilde s+\delta )-a(\tilde s)}{\delta } = \lim_{\delta \to 0^+} \frac{a(\tilde s+\delta )-u_0(p_{\tilde s}, \tilde s)}{\delta } \\
& \leq \lim_{\delta \to 0^+} \frac{u_0(p_{\tilde s}, \tilde s+\delta )-u_0(p_{\tilde s}, \tilde s)}{\delta } = (\partial_s u_0)(p_{\tilde s}, \tilde s) \\
& = (n-R_{\tilde g} + u_0)(p_{\tilde s}, \tilde s) \leq u_0(p_{\tilde s}, \tilde s) + C \\
& = a(\tilde s) + C ,
\end{split}
\end{equation}
where the uniform lower bound of the scalar curvature is used.%
\end{proof}

Using (\ref{eogu0}), on $X\times \left\{ \tilde s \right\}$, we can compute 
\begin{equation} \label{eogu1}
\begin{split}
& \left( \partial_s - \Delta \right)  \cH \\
= & \frac{\left( \partial_s - \Delta \right)|\nabla u|^2}{ u }  - \frac{ 2 |\nabla u|^4 }{ u^3 } + \frac{ 2 Re\left\{ \nabla |\nabla u|^2\cdot\overline{\nabla} u\right\}}{ u^2}  - \frac{ |\nabla u|^2 }{ u^2 } \left( \partial_s - \Delta \right) ( u_0 - a ) \\
= & \frac{|\nabla  u|^2 - |\nabla\nabla  u|^2 - |\nabla\overline{\nabla}  u|^2}{ u } - \frac{ 2 |\nabla u|^4 }{u^3} \\
&\qquad\qquad + \frac{ 2 Re\left\{ \nabla |\nabla u|^2\cdot\overline{\nabla} u\right\}}{ u^2}  + \frac{ |\nabla u|^2 }{ u^2 } ( \partial_s a - u_0 ) \\
= & -\frac{|\nabla\nabla  u|^2 + |\nabla\overline{\nabla}  u|^2}{ u } - \frac{ 2 |\nabla u|^4 }{u^3} + \frac{ 2 Re\left\{ \nabla |\nabla u|^2\cdot\overline{\nabla} u\right\}}{ u^2} + \frac{ |\nabla u|^2 }{ u^2 } ( \partial_s a -a +1 ), 
\end{split}
\end{equation}
Hence by Lemma \ref{FDQofa}, on $X\times \left\{ \tilde s \right\}$, we can compute
\begin{equation} \label{eogu2}
\begin{split}
& \left( \partial_s - \Delta \right)  \cH \\
\leq & -\frac{|\nabla\nabla  u|^2 + |\nabla\overline{\nabla}  u|^2}{ u } - \frac{ 2 |\nabla u|^4 }{u^3} + \frac{ 2 Re\left\{ \nabla |\nabla u|^2\cdot\overline{\nabla} u\right\}}{ u^2} + \frac{ C|\nabla u|^2 }{ u^2 } \\
= & -\frac{|\nabla\nabla  u|^2 + |\nabla\overline{\nabla}  u|^2}{ u } - \frac{ 2 \ep |\nabla u|^4 }{u^3} + \frac{ 2(1-\ep) Re\left\{ \nabla \cH \cdot\overline{\nabla} u\right\}}{ u }  \\
& + \frac{ C|\nabla u|^2 }{ u^2 } + \frac{ 2\ep Re\left\{ \nabla |\nabla u|^2\cdot\overline{\nabla} u\right\}}{u^2} ,
\end{split}
\end{equation}
Then by the same argument as Perelman (see \cite{SeT}), if $\ep<1/4$, then we have
$$
\frac{ 2\ep Re\left\{ \nabla |\nabla u|^2\cdot\overline{\nabla} u\right\}}{u^2}\leq  \frac{ \ep |\nabla u|^4 }{u^3} + \ep\frac{|\nabla \nabla u|^2 + |\nabla\overline{\nabla} u|^2}{ u },
$$
hence from (\ref{eogu2}), on $X\times \left\{ \tilde s \right\}$ we have
\begin{equation} \label{eogu3}
\begin{split}
& \left( \partial_s - \Delta \right)  \cH \leq -\frac{1}{2}\frac{ |\nabla\overline{\nabla}  u|^2}{ u } - \frac{ \ep |\nabla u|^4 }{u^3} + \frac{ 2(1-\ep) Re\left\{ \nabla \cH \cdot\overline{\nabla} u\right\}}{ u } + \frac{ C|\nabla u|^2 }{ u^2 } .
\end{split}
\end{equation}
Now, at the point $(\tilde x, \tilde s)$, we have $\nabla \cH=0$, therefore by the maximum principle and (\ref{eogu3}), at $(\tilde x, \tilde s)$ we have
\begin{equation} \label{eogu4}
\begin{split}
&0\leq  \left( \partial_s - \Delta \right)  \cH \leq - \frac{ \ep |\nabla u|^4 }{u^3} + \frac{ C|\nabla u|^2 }{ u^2 } ,
\end{split}
\end{equation}
from which we conclude that
$$
\cH (\tilde x, \tilde s) = \frac{ |\nabla u|^2 }{ u } (\tilde x, \tilde s) \leq C,
$$
This proves (\ref{geou2}).

Next, for the Laplacian estimate (\ref{leou2}), we define
$$
K_1:= \frac{ - \Delta u + C_0 }{ u },\qquad   \cK := K_1 + 100\cH,
$$
where $C_0=C_0(n, \omega_0)<\infty$ is a constant such that
$$
- \Delta u + C_0 = R_{\tilde g}(s) - n + C_0 \geq 0
$$
holds globally. As before, we assume that for some $\tilde s>0$, $\cK$ achieves its maximum on $X\times [0, \tilde s]$ at the point $(\tilde x, \tilde s)$, and that $a(s)$ is differentiable at $s=\tilde s$. Then using (\ref{eolu0}) and Lemma \ref{FDQofa}, on $X\times \left\{ \tilde s \right\}$ we compute
\begin{equation} \label{eolu2}
\begin{split}
& \left( \partial_s - \Delta \right) K_1  \\
& = \frac{  \left( \partial_s - \Delta \right) (- \Delta u) }{ u } - \frac{ - \Delta u + C_0 }{ u^2 } \left( \partial_s - \Delta \right) ( u_0 - a ) + \frac{ 2  Re\left\{ \nabla K_1 \cdot \overline{\nabla} u\right\} }{ u } \\
& = \frac{ |\nabla\overline{\nabla} u|^2 }{ u } + \frac{ 2  Re\left\{ \nabla K_1 \cdot \overline{\nabla} u\right\}  }{ u } + \frac{ -\Delta u + C_0 }{ u^2 } \left( \partial_s a - a + 1 \right) - \frac{ C_0 }{ u } \\
& \leq \frac{ |\nabla\overline{\nabla} u|^2 }{ u } + \frac{ 2  Re\left\{ \nabla K_1 \cdot \overline{\nabla} u\right\}  }{ u } + C\frac{ |\nabla\overline{\nabla} u| + 1 }{ u^2 }  .
\end{split}    
\end{equation}
Combining (\ref{eogu3}) (with $\ep=0$ there) and (\ref{eolu2}), and the gradient estimate (\ref{geou2}), on $X\times \left\{ \tilde s \right\}$, we obtain that
\begin{equation} \label{eolu3}
\begin{split}
\left( \partial_s - \Delta \right) \cK & \leq -40\frac{ |\nabla\overline{\nabla}  u|^2}{ u } + \frac{ 2  Re\left\{ \nabla \cK \cdot \overline{\nabla} u\right\}  }{ u } + C\frac{ |\nabla\overline{\nabla} u| + 1 }{ u^2 } \\
& \leq -30\frac{ |\nabla\overline{\nabla}  u|^2}{ u } + \frac{ 2  Re\left\{ \nabla \cK \cdot \overline{\nabla} u\right\}  }{ u } + C  .
\end{split}    
\end{equation}
Now, at the point $(\tilde x, \tilde s)$, we have $\nabla \cK=0$, therefore by the maximum principle and (\ref{eolu3}), at $(\tilde x, \tilde s)$ we have
\begin{equation} \label{eolu4}
\begin{split}
&0\leq  \left( \partial_s - \Delta \right)  \cK \leq -30\frac{ |\nabla\overline{\nabla}  u|^2}{ u } + C ,
\end{split}
\end{equation}
from which we conclude that
$$
\frac{ |\Delta u|^2 }{ u } (\tilde x, \tilde s) \leq C,
$$
Combining the fact that $u\geq 1$ and the gradient estimate (\ref{geou2}), we conclude that $\cK(\tilde x, \tilde s) \leq C$. This proves (\ref{leou2}), hence completes the proof.
\end{proof}       
\begin{remark}\label{aisnotC1}
In the proof of Proposition \ref{gandleou2}, we assume that $a(s)$ is $C^1$. In general, $a(s)$ is only a Lipschitz function (cf. \cite[Lemma 4.2]{JST23a}), but we can always apply perturbation tricks.   For example,  $a(s)$ is perturbed to a smooth function at any based time in \cite[Lemma 4.3]{JST23a} and the same argument can be carried out here verbatim. We refer the readers to \cite{JST23a} for more details.

%
    
\end{remark}

 
\section{Upper bound of the Ricci potential}\label{UBofRP}

In this section, we apply Theorem \ref{main1} to prove Theorem \ref{eoper}. Throughout this section, all the constants will at most depend on $n, \omega_0$.

Recall that  $\omega(t)$ with  $t\in [0, 1)$ is the solution of the unnormalized flow (\ref{unkrflow1}) and $\tilde\omega(s)$ with  $s\in [0, \infty)$ is the solution of the normalized  flow (\ref{nkrflow1}). They are related by 
\begin{equation}\label{rb unkrf and nkrf'3}
s=-\ln (1-t),~~ t=1-e^{-s},~~ \tilde{\omega}(s)=(1-t)^{-1}\omega(t).
\end{equation}
Fix a time $t_0\in [1/2, 1)$, and set $s_0=-\ln (1-t_0)$. Let $u$ be the normalized Ricci potential defined in (\ref{deofu}). By Proposition \ref{gandleou2}, we have 
\begin{equation} \label{esofv3}
\frac{| \Delta u  |}{ u } + \frac{|\nabla u |^2}{ u } \leq C,
\end{equation}
on $X\times [0, \infty)$. To obtain the time-derivative estimate, we need the following lemma by elementary Calculus calculations.
\begin{lemma}[Lemma 4.3 in \cite{JST23a}] \label{a'3} 
For any $T\geq 0$, there exists $B=B(n, X, g_0)>0 $ such that for $s\in [0, s_0+T]$ we have
\begin{equation} \label{a'4}
e^{s-s_0}a(s_0)-B\leq a(s).
\end{equation}
\end{lemma}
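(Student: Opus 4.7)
The plan is to bound $a(s)$ from below on each side of $s_0$ separately, since the forward direction ($s \geq s_0$) and backward direction ($s \leq s_0$) rely on genuinely different inputs. In both cases the argument is a differential-inequality (Gronwall) computation for $a(s) = \inf_X u_0(\cdot,s)$; the key fact needed throughout is that $a$ is Lipschitz in $s$ (it is the infimum of a smooth family with uniform gradient control in $s$ on any compact time interval), hence differentiable a.e.

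\textbf{Forward direction $s \in [s_0, s_0+T]$.} Here I would use the simple observation that $v := e^{-s} u_0$ satisfies the linear heat equation
\[
\partial_s v = \Delta_{\tilde g(s)} v
\]
on the compact manifold $X$, as one sees by rewriting $\partial_s u_0 = \Delta u_0 + u_0$. The parabolic minimum principle then gives that $\min_X v(\cdot,s) = e^{-s} a(s)$ is non-decreasing in $s$, so
\[
a(s) \geq e^{s-s_0} a(s_0) \qquad \text{for all } s \geq s_0,
\]
which is stronger than what is claimed (any $B \geq 0$ works on this half).

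\textbf{Backward direction $s \in [0, s_0]$.} Here the heat-equation monotonicity runs the wrong way, so I would use Claim~\ref{FDQofa} from the proof of Proposition~\ref{gandleou2}, which asserts that at any differentiability point
\[
\partial_s a(s) \leq a(s) + C, \qquad C = C(n,X,g_0),
\]
with $C$ coming from the uniform scalar curvature lower bound $R_{\tilde g}(\cdot) \geq -C'$ along the Fano K\"ahler--Ricci flow. This is equivalent to $\frac{d}{ds}(e^{-s} a(s)) \leq C e^{-s}$ a.e. Integrating from $s$ up to $s_0$ yields
\[
e^{-s_0} a(s_0) - e^{-s} a(s) \leq C(e^{-s} - e^{-s_0}),
\]
which rearranges to
\[
a(s) \geq e^{s-s_0} a(s_0) + C(e^{s-s_0}-1) \geq e^{s-s_0} a(s_0) - C,
\]
using $e^{s-s_0}-1 \in [-1,0]$ on this interval. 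Combining the two halves gives the conclusion with $B := C$, which depends only on $n, X, g_0$ and in particular is independent of $T$.

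\textbf{Main obstacle.} The only real technical point is the low regularity of $a(s)$: Claim~\ref{FDQofa} is derived at a differentiability time under the assumption that a Ricci vertex can be singled out and the infimum has a one-sided derivative expression, but $a$ is only Lipschitz a priori. As indicated in Remark~\ref{aisnotC1}, this is handled either by invoking Rademacher's theorem (a.e.\ differentiability suffices for the Gronwall step above) or by the time-perturbation trick of \cite[Lemma 4.3]{JST23a}, which smooths $a$ near any base time and passes to a limit. Neither modification changes the constant.
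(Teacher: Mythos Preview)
Your proof is correct. The paper itself does not give a proof of this lemma, only citing \cite[Lemma~4.3]{JST23a} and calling it ``elementary Calculus calculations''; your backward-direction argument (Gronwall applied to the inequality $\partial_s a \le a + C$ of Claim~\ref{FDQofa}, valid at a.e.\ $s$ since $a$ is Lipschitz) is precisely that elementary calculation, and your forward-direction observation that $e^{-s}u_0$ solves the linear heat equation and hence has non-decreasing minimum is a clean way to handle $s\ge s_0$ that even removes the need for any additive constant on that half.
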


Given $s_0$, there exists a uniform constant $B>0$ independent of $s_0$ by Lemma \ref{a'3} (with $T=0$)) such that for all $s\in [0, s_0]$, we have
\begin{equation} \label{a3}
e^{s-s_0}a(s_0)-B\leq a(s).
\end{equation}
We define
$$
v=u_0-(e^{s-s_0}a(s_0)-B) + 1 ,
$$
on $X\times [0, s_0]$, which is a smooth function. Then $v\geq 1$ on $X\times [0, s_0]$ and it satisfies the following coupled equations
\begin{equation}\label{ceofv2}
\left\{
\begin{array}{l}
\partial_s v = \Delta v + v - (B+1)=n- R_{\tilde g}(s) + v - (B+1) , \\
\\
\ric (\tilde\omega(s)) = \tilde\omega(s) - \ddbar v ,
\end{array} \right. 
\end{equation}
on $X\times [0, s_0]$  from (\ref{ceofu}). Hence from (\ref{esofv3}), there exists $C>0$ such that 
\begin{equation} \label{esofv4}
\frac{ |\partial_s v |}{ v } + \frac{| \Delta v  |}{ v } + \frac{|\nabla v |^2}{ v } \leq C
\end{equation}
on $X\times [0, s_0]$. As before, we abuse the notation by identifying $v(t)$ with $v(s(t))$ for $s(t)=-\ln (1-t)$. By (\ref{esofv4}), we have 
\begin{equation} \label{esofv5}
\frac{ |\partial_t v |}{ v } + \frac{| \Delta v  |}{ v } + \frac{|\nabla v |^2}{ v } \leq \frac{C}{1-t},
\end{equation}
on $X\times [0, t_0]$. In particular, $ \inf_{X} v(\cdot, t_0)= B+1$ by the definition of $a$.

 Now, we can apply Theorem \ref{main1} and it gives the following Harnack estimate
$$
\frac{ \sup_{X} v ( \cdot, t_0 ) }{ \inf_{X} v ( \cdot, t_0 ) } \leq C .
$$
Since   $ \inf_{X} v( \cdot, t_0 )\leq C$, 
$$ \sup_{X} v( \cdot, t_0 )\leq C.$$ By the Laplacian estimate, we can conclude that the scalar curvature is uniformly bounded from above by $C(1-t_0)^{-1}$ on $X\times \left\{ t_0 \right\}$ along the unnormalized flow. By Perelman's volume non-collapsing estimate, the diameter is uniformly bounded from above by $C(1-t_0)^{1/2}$ on $X\times \left\{ t_0 \right\}$ in the unnormalized flow since ${\rm Vol}_{g(t)}(X) \leq C (1-t)^n$ for all $t\in [0, 1)$. Thee above estimates hold for all  $t_0\in [1/2, 1)$ and we have proved Theorem \ref{eoper}.

\begin{remark} The uniform upper bound for $ \inf_{X} v( \cdot, t_0 )$ is not used in the proof of Theorem \ref{main1}. It is whereas required in Perelman's proof for the uniform diameter bound. We would also like to point out that Theorem \ref{main1} can also be directly applied to prove Theorem \ref{eoper} for the Ricci potential by Perelman's original normalization (\ref{nov}).

\end{remark}

\section{Perelman's normalization for the Ricci potential}\label{LBofPernorm}

In this section, we provide two new arguments to obtain the lower bound of the Ricci potential by Perelman's original normalization.  

\subsection{Perelman's normalization}\label{Pernorm}

We first recall Perelman's normalization for the Ricci potential $u_0$ define in (\ref{defu0}). 

First, we find a smooth function $b(s): [0, \infty) \to \mathbb{R}$ such that
\begin{equation}\label{nov}
\int_X e^{-u_0-b} d\tilde g(s) = (2\pi)^{n},
\end{equation}
Then we denote the Ricci potential by 
\begin{equation} \label{defu}
u = u_0 + b(s) .
\end{equation}
If we let $a(s)=b'(s)-b(s)$, then $u$ satisfies the following coupled equations 
\begin{equation}\label{ceofu}
\left\{
\begin{array}{l}
\partial_s u = \Delta u + u + a = n-R_{\tilde g}(s) + u + a , \\
\\
\ric (\tilde\omega(s)) = \tilde\omega(s) - \ddbar u ,
\end{array} \right.
\end{equation}
on $M\times [0, \infty)$. Differentiating (\ref{nov}), we can find that
\begin{equation}\label{foa}
a(s)= -(2\pi)^{-2n}\int_X u e^{- u } d\tilde g(s).
\end{equation}
Using the entropy lower bound, we have
\begin{equation}\label{boa}
|a(s)|\leq C, ~~ s\in [0, \infty),
\end{equation}
for some uniform constant $C=C(n, g_0)<\infty$. Perelman then proved the following important lower bound of the normalized Ricci pontential $u$.

\begin{lemma} \label{lbou}
There exists constant $C=C(n, g_0)<\infty$, such that
$$ u(\cdot, s)  \geq  -C, $$
for all $s\in [0, \infty)$.
\end{lemma}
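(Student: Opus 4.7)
The plan is to deduce the lower bound $u \geq -C$ for Perelman's normalized Ricci potential $u = u_0 + b(s)$ by comparison with the alternative normalization $\tilde u = u_0 - a(s) + 1$ introduced in Section \ref{anewnorm} (where here $a(s) = \inf_X u_0(\cdot, s)$). Since both $u$ and $\tilde u$ differ from the raw Ricci potential $u_0$ only by a function of time, their difference
$$
\Phi(s) := u(\cdot, s) - \tilde u(\cdot, s)
$$
is purely a function of $s$, and the lemma reduces to showing that $\Phi(s)$ is uniformly bounded below.

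The key steps are the following. First, the analysis of Sections \ref{anewnorm}--\ref{UBofRP} already gives the uniform two-sided bound $1 \leq \tilde u(\cdot, s) \leq C$ on $X \times [0, \infty)$, where the upper bound is precisely where Theorem \ref{main1} enters. Second, substituting $u = \tilde u + \Phi(s)$ into the defining integral constraint (\ref{nov}) yields
$$
(2\pi)^n = \int_X e^{-u(\cdot,s)} d\tilde g(s) = e^{-\Phi(s)} \int_X e^{-\tilde u(\cdot, s)} d\tilde g(s).
$$
Third, along the normalized flow (\ref{nkrflow1}) the cohomology class $[\tilde\omega(s)] = c_1(X)$ is $s$-independent, so $\vol_{\tilde g(s)}(X)$ is a fixed topological constant; combined with $1 \leq \tilde u \leq C$, this produces a uniform two-sided positive bound on $\int_X e^{-\tilde u} d\tilde g(s)$. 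Hence $\Phi(s)$ is uniformly bounded, and $u = \tilde u + \Phi$ is uniformly bounded on $X \times [0, \infty)$.

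An alternative route, which avoids the detour through $\tilde u$, applies Theorem \ref{main1} directly to a suitable shift $u + C_0$ of Perelman's potential. This requires the analogue of Proposition \ref{gandleou2} in Perelman's normalization, where the bound on $\partial_s a(s)$ from Claim \ref{FDQofa} is replaced by the uniform estimate $|a(s)| = |b'(s) - b(s)| \leq C$ from (\ref{boa}); with this substitution the maximum-principle argument goes through essentially unchanged. Jensen's inequality applied to (\ref{nov}) provides a uniform lower bound on $\sup_X u(\cdot, s)$, and Harnack then converts this into a uniform lower bound on $u$ itself. The main obstacle in either route is already absorbed into Theorem \ref{main1}, whose proof uses $H_n$-centers, Perelman's reduced length, and Bamler's volume non-collapsing; once that input is available, the present lemma amounts to a short bookkeeping argument relying on the integral normalization (\ref{nov}) and the invariance of the K\"ahler class along the normalized flow, and this substitutes for Perelman's original geometric contraction argument.
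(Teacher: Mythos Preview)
Your first route is correct and non-circular: nothing in Sections \ref{HeofFanoKRF}--\ref{UBofRP} uses Lemma \ref{lbou}, so once the two-sided bound $1 \leq \tilde u \leq C$ is available from Section \ref{UBofRP}, the integral constraint (\ref{nov}) does pin down $\Phi(s)$ and the lemma follows. Note that for the direction you need (a \emph{lower} bound on $\Phi$) it is precisely the \emph{upper} bound $\tilde u \leq C$ from Theorem \ref{main1} that is doing the work, via $\int_X e^{-\tilde u}\,d\tilde g \geq e^{-C}\vol_{\tilde g}(X)$.

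This is, however, a genuinely different argument from either of the paper's two proofs. The paper deliberately proves Lemma \ref{lbou} by methods \emph{independent} of Theorem \ref{main1}: the first (Section \ref{moseriapp}) runs Moser iteration on $e^{-pu}$ using only the uniform Sobolev inequality along the flow, the inequality $\Delta u \leq C$, and the normalization $\|e^{-u}\|_{L^1}=(2\pi)^n$; the second (Section \ref{CMAeapp}) argues by contradiction, propagating a hypothetical very negative value of $u$ forward via $\partial_s u \leq u + C$ and then using the Monge--Amp\`ere relation $e^{-\partial_s\varphi}\tilde\omega^n = e^{-\varphi}\Omega$ to contradict (\ref{nov}). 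Both are self-contained inputs to Perelman's original scheme. Your argument instead routes the lemma through the full strength of the new Harnack estimate; this is short once Section \ref{UBofRP} is in hand, but it makes Lemma \ref{lbou} a corollary of the new approach rather than an independent result, which is the opposite of the paper's intent in Section \ref{LBofPernorm}.

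Your second route has a real gap. To apply Theorem \ref{main1} to $u + C_0$, and even to form the quotients $|\nabla u|^2/(u+C_0)$ in an analogue of Proposition \ref{gandleou2}, you need $u + C_0 > 0$ globally with a \emph{uniform} constant $C_0$; that is exactly Lemma \ref{lbou}. The Jensen step only yields $\sup_X u(\cdot,s) \geq -C$, which does not make the denominator positive everywhere. If instead you allow $C_0$ to depend on $s$ so that $u + C_0 \geq 1$, then $u + C_0$ coincides (up to a bounded constant) with $\tilde u$, and you are back to the first route. The remark at the end of Section \ref{UBofRP} about applying Theorem \ref{main1} in Perelman's normalization presupposes Lemma \ref{lbou}; it does not replace it.
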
 
This lower bound of $u$ enables Perelman to define and estimate the quantities 
$$
\frac{ |\nabla u|^2}{ u + B }, ~~ \frac{ \left| \Delta u \right|}{ u + B }
$$
for some uniform constant $B=B(n,X, g_0)>>1$.
Our goal  is to provide two alternative proofs for  Lemma \ref{lbou}.
%

\subsection{Proof of Lemma \ref{lbou} by Moser's iteration}\label{moseriapp}

First, we recall the following Sobolev inequality along Ricci flow due to \cite[Theorem 1.5]{Ye}, see also \cite{ZhQ07}.
\begin{proposition} \label{SobolevaRF}
Let $(M, g(t))$, $t\in [0, T]$ be a solution of the Ricci flow on compact $n$-manifold. Then there exists a constant $C=C(n, g(0), T)<\infty$, such that for any $u\in W^{1,2}(M)$ and $t\in [0, T]$, we have
\begin{equation} \label{SobolevaRF2}
\left( \int_M |u|^{\frac{2n}{n-2}} dg(t) \right)^{\frac{n-2}{n}} \leq C\int_M \left( |\nabla u|^2 + \frac{R}{4} u^2 \right) dg(t) + C\int_M u^2 dg(t), 
\end{equation}
on $X\times [0, T]$.
\end{proposition}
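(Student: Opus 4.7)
The plan is to derive this Sobolev inequality from Perelman's entropy monotonicity, in the spirit of Ye and Zhang. The strategy is two-fold: first use the lower bound on $\mu[g(t), \tau]$ coming from monotonicity to obtain a uniform logarithmic Sobolev inequality on each time slice, then convert this family of log-Sobolev inequalities into the stated Sobolev inequality.

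For Step 1, fix any $\tau_0 > T$ and recall from the excerpt that $t \mapsto \nu[g(t), \tau_0 - t]$ is non-decreasing on $[0, T]$. Thus $\mu_0 := \nu[g(0), \tau_0]$ is a finite constant depending only on $g(0), n, T$ and satisfies $\mu[g(t), \tau] \geq \mu_0$ for all $t \in [0, T]$ and $\tau \in (0, \tau_0 - T]$. For any $u \in W^{1,2}(M)$ with $\int_M u^2 dg(t) = 1$, writing $u^2 = (4\pi\tau)^{-n/2} e^{-f}$ and substituting $f = -\log u^2 - \tfrac{n}{2}\log(4\pi\tau)$ into $\cW[g(t), f, \tau]$, the inequality $\mu[g(t), \tau] \leq \cW[g(t), f, \tau]$ unpacks to the log-Sobolev inequality
$$
\int_M u^2 \log u^2 \, dg(t) \leq 4\tau \int_M |\nabla u|^2 dg(t) + \tau \int_M R u^2 dg(t) - \tfrac{n}{2}\log(4\pi\tau) - \mu_0,
$$
valid uniformly in $t \in [0, T]$ and $\tau \in (0, \tau_0 - T]$.

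For Step 2, set $Q_t(u) := \int_M (|\nabla u|^2 + \tfrac{R}{4} u^2) dg(t)$. Using the uniform lower bound $R \geq -C_0$ on $M \times [0, T]$ (finite because the flow is smooth on the closed interval $[0, T]$), the quantity $Q_t(u) + C_1 \int_M u^2 dg(t)$ is a positive quadratic form uniformly equivalent to the Dirichlet energy. Optimizing the log-Sobolev inequality in $\tau$ (taking $\tau = n/(8(Q_t(u) + C_1\int u^2))$ when admissible, otherwise $\tau = \tau_0 - T$) yields a Nash-type inequality
$$
\int_M u^2 \log u^2 \, dg(t) \leq \tfrac{n}{2}\log\Bigl(C_2 Q_t(u) + C_2 \int_M u^2 dg(t)\Bigr) \quad \text{when } \int_M u^2 dg(t) = 1.
$$
From this, the Sobolev inequality (\ref{SobolevaRF2}) follows by the classical Nash-type-to-Sobolev equivalence (Bakry-Coulhon-Ledoux-Saloff-Coste).

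The main obstacle is making the Nash-to-Sobolev conversion quantitative with constants that remain uniform in $t \in [0, T]$. Sign changes of $R$ and the Rothaus-type lower order correction terms arising in the Nash inequality are exactly what the second term $C \int_M u^2 dg(t)$ on the right-hand side of (\ref{SobolevaRF2}) absorbs. An alternative, more self-contained route that avoids the Nash-to-Sobolev black box is to use Davies's method: the uniform log-Sobolev inequality directly implies ultracontractivity of the heat semigroup, and the resulting on-diagonal heat-kernel upper bound is equivalent to the Sobolev inequality.
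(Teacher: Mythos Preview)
The paper does not give its own proof of this proposition: it simply cites it as \cite[Theorem~1.5]{Ye} (see also \cite{ZhQ07}) and then uses it. Your sketch is precisely the argument of those references---entropy monotonicity gives a uniform lower bound on $\mu[g(t),\tau]$, which translates into a uniform family of logarithmic Sobolev inequalities, and these are then upgraded to the Sobolev inequality via either the Nash/entropy--energy route or Davies's ultracontractivity method---so your approach is correct and coincides with what the paper is invoking.
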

Now we can give a Moser iteration approach to Lemma \ref{lbou}.
\begin{proof}[Proof of Lemma \ref{lbou}]
Throughout the proof, the constants $C$ depends at most on $n, \omega_0$.
First, we apply Proposition \ref{SobolevaRF} to the solution of the unnormalized flow (X, g(t)), $t\in [0,1)$, we obtain that, for any $v\in W^{1,2}(X)$ and $t\in [0, 1)$, we have
\begin{equation} \label{SobolevaRF3}
\left( \int_X |v|^{\frac{2n}{n-1}} dg(t) \right)^{\frac{n-1}{n}} \leq C\int_X \left( |\nabla v|^2 + \frac{R}{4} v^2 \right) dg(t) + C\int_X v^2 dg(t).
\end{equation}
Translating this into the normalized flow $(X, \tilde g(s))$, $s\in [0,\infty)$, we obtain that, for any $v\in W^{1,2}(X)$ and $s\in [0,\infty)$, we have
\begin{equation} \label{SobolevaRF4}
\left( \int_X |v|^{\frac{2n}{n-1}} d\tilde g(s) \right)^{\frac{n-1}{n}} \leq C\int_X \left( |\nabla v|^2 + \frac{R_{\tilde g}(s)}{4} v^2 \right) d\tilde g(s) + Ce^{-s}\int_X v^2 d\tilde g(s).
\end{equation}
Taking trace of the second equation in (\ref{ceofu}), we have
$$
\Delta u = - R_{\tilde g}(s) + n \leq C,
$$
For any $p\geq 1$, we apply (\ref{SobolevaRF4}) to $v=e^{-\frac{p}{2}u}$, we obtain
\begin{equation} \label{lbofDeltau}
\begin{split}
\int_X & (\Delta u) e^{-pu} d\tilde g(s) = \int_X \frac{4}{p} \left| \nabla e^{-\frac{p}{2}u} \right|^2 d\tilde g(s) \\
& \geq \frac{1}{Cp}  \left\| e^{-\frac{p}{2}u} \right\|_{L^{\frac{2n}{n-1}}}^2 - \frac{1}{p}\int_X R_{\tilde g}(s) e^{-pu} d\tilde g(s) - \frac{4}{p} \int_X e^{-pu} d\tilde g(s) \\
& = \frac{1}{Cp}  \left\| e^{-\frac{p}{2}u} \right\|_{L^{\frac{2n}{n-1}}}^2 + \frac{1}{p}\int_X (\Delta u - n) e^{-pu} d\tilde g(s) - \frac{4}{p}\int_X e^{-pu} d\tilde g(s),
\end{split}
\end{equation}
hence we have
\begin{equation} \label{lbofDeltau2}
\begin{split}
\frac{1}{Cp}  \left\| e^{-\frac{p}{2}u} \right\|_{L^{\frac{2n}{n-1}}}^2 & \leq \left(1-\frac{1}{p}\right)\int_X (\Delta u) e^{-pu} d\tilde g(s) + C\int_X e^{-pu} d\tilde g(s)\\
& \leq C\int_X e^{-pu} d\tilde g(s).
\end{split}
\end{equation}
We set $\beta=\frac{n}{n-1}$, then (\ref{lbofDeltau2}) can be rewriten as 
\begin{equation} \label{lbofDeltau3}
\begin{split}
\left\| e^{-u} \right\|_{L^{p\beta}} \leq \left(Cp\right)^{\frac{1}{p}}\left\| e^{-u} \right\|_{L^{p}}.
\end{split}
\end{equation}
Hence if we denote by $p_k=\beta^k$, then iterating (\ref{lbofDeltau3}) gives 
\begin{equation} \label{lbofDeltau4}
\begin{split}
\left\| e^{-u} \right\|_{L^{p_{k+1}}} & \leq C^{\sum_{j=1}^k\beta^{-j}} \beta^{\sum_{j=1}^kj\beta^{-j}} \left\| e^{-u} \right\|_{L^{1}} \leq C\left\| e^{-u} \right\|_{L^{1}}.  \\
\end{split}
\end{equation}
Letting $k\to\infty$ yields that
$$
\sup_{X} e^{-u} \leq C \left\| e^{-u} \right\|_{L^{1}} = C,
$$
which completes the proof.
\end{proof}
%

%
\smallskip
\subsection{Proof of Lemma \ref{lbou} using Complex Monge-Amp\`ere equations} \label{CMAeapp}

\begin{proof}[Proof of Lemma \ref{lbou}]
The first half part of the proof is the same to Perelman's proof, we include the details for completeness.
We assume the estimate is not true. Then there exists a time $s_0>0$ and a subset $U\subset X$, such that
\begin{equation}\label{nupofu1}
u(\cdot, s_0)\leq -3A, ~~on ~~ U,
\end{equation}
for some large constant $A=A(n, g_0)>>1$ to be determined. From the first equation in (\ref{ceofu}), we have
$$
\partial_s u = n- R_{\tilde g}(s) + u \leq u + C,
$$
for some constant $C=C(n, g_0)<\infty$, hence from (\ref{nupofu1}) we have
\begin{equation}\label{nupofu2}
u(\cdot, s)\leq e^{s-s_0}(u(\cdot, s_0)+C)\leq -2Ae^{s-s_0}, ~~on ~~ U,
\end{equation}
for all $s\geq s_0$, if we choose $A$ large enough. Then we compute
$$
\partial_s (\varphi + b(s)) = \dds{\varphi} + b'(s)= u+a(s),
$$
then from $a\leq C$, we have 
\begin{equation}\label{dbofvarphi}
\partial_s (\varphi + b(s))\leq u + C,
\end{equation}
for some  $C=C(n, g_0)>0$. Hence for any $x\in U$, we have
\begin{equation}\label{nubofvarphi}
\begin{split}
\varphi(x,s) + b(s) & \leq \varphi(x,s_0) + b(s_0) + \int_{s_0}^{s} (-2Ae^{\tilde s-s_0}+C) d\tilde s\\
& \leq -Ae^{ s-s_0}+C(s_0) ,
\end{split}
\end{equation}
for all $s\geq s_0$, if we choose $A=A(n, g_0)$ large enough, where $C(s_0)>0$ depends on $n, \omega_0, s_0$ but not on the choice of $s\geq s_0$.

Now, using the complex Monge-Amp\`ere flow associated to the K\"ahler-Ricci flow, we have  
$$
e^{-\dds{\varphi}}\tilde\omega(s)^n = e^{-\varphi}\Omega ,
$$
and by (\ref{nubofvarphi}), we have 
\begin{equation}
\begin{split}
(2\pi)^{2n} & = \int_X e^{-u} \tilde\omega(s)^n = \int_X e^{-b(s)} e^{-\dds{\varphi}} \tilde\omega(s)^n \\
& = \int_X e^{-b(s)} e^{-\varphi} \Omega \geq \int_U e^{-(\varphi+b(s))} \Omega \\
& \geq e^{Ae^{ s-s_0} - C(s_0)} \vol_{\Omega}(U).
\end{split}
\end{equation}
Hence 
$$
Ae^{ s-s_0} \leq C(s_0),
$$
for all $s\geq s_0$, which is impossible if we let $s\to\infty$. This completes the proof.
\end{proof}
%



\bigskip
\bigskip

\noindent{\bf Acknowledgements} The authors would like to thank Richard Bamler, Bin Guo, Maximilien Hallgren, Yalong Shi and Zhenlei Zhang for many inspiring discussions. The first named author thanks Xiaochun Rong, Zhenlei Zhang and Kewei Zhang for hospitality and providing an excellent environment during his visits to Capital Normal University and Beijing Normal University where part of this work was carried out.  

\bigskip
\bigskip


\end{document}